\newtheorem*{theorema}{Theorem A}
\newtheorem*{theoremb}{Theorem B}
\newtheorem*{theoremc}{Theorem C}
\newtheorem*{Theorem*}{Theorem}
\newtheorem*{Claim*}{Claim}
\newtheorem*{Question*}{Question}
\theoremstyle{remark}
\DeclareMathOperator{\Hom}{Hom}
\DeclareMathOperator{\Spec}{Spec\,}
\def\a{\mathfrak{a}}
\newcommand{\vp}{\varphi}
\newcommand{\sO}{\mathcal{O}}
\newcommand{\bR}{\mathbb{R}}
\newcommand{\bZ}{\mathbb{Z}}
\newcommand{\bN}{\mathbb{N}}
\newcommand{\bC}{\mathbb C}
\newcommand{\sC}{\mathcal{C}}
\newcommand{\sD}{\mathcal{D}}
\renewcommand{\:}{\colon}
\newcommand{\xalphaqn}{\prod_{m=1}^n \prod_{i=0}^d x_{i,m}^{\alpha_{i,m}/q}}
\newcommand{\xaqn}{\prod_{m=1}^n \prod_{i=0}^d x_{i,m}^{a_{i,m}/q}}
\newcommand{\braket}[1]{\langle #1 \rangle}
\newcommand{\kay}{\mathcal{k}}
\newcommand{\cf}{{\itshape cf.} }
\renewcommand{\a}{\alpha}
\renewcommand{\epsilon}{\varepsilon}
\renewcommand{\bar}{\overline}
\title{Symbolic and Ordinary Powers of Ideals in Hibi Rings}
\author{Janet Page}
\email{jpage8@uic.edu}
\address{Department of Mathematics\\ University of Illinois at Chicago\\Chicago\\  IL 60607}
\author{Daniel Smolkin}
\email{smolkin@math.utah.edu}
\address{Department of Mathematics\\ University of Utah\\ Salt Lake City\\ UT 84112}
\author{Kevin Tucker}
\email{kftucker@uic.edu}
\address{Department of Mathematics\\ University of Illinois at Chicago\\Chicago\\  IL 60607}
\thanks{
The first author was partially supported by a Dean's Scholarship from the University of Illinois at Chicago.
The second author was partially supported by NSF grants DMS \#1252860, DMS \#1501102, and DMS \#1246989.
The third author is grateful to the NSF for partial support under Grants DMS \#1602070 and \#1707661, and for a fellowship from the Sloan Foundation. }
\begin{document}

\begin{abstract} 
We exhibit a class of Hibi rings which are diagonally F-regular over fields of positive characteristic, and diagonally $F$-regular type over fields of characteristic zero, in the sense of \cite{SmolkinCarvajal}.  It follows that such Hibi rings satisfy the uniform symbolic topology property \emph{effectively} in all characteristics. Namely, for rings $R$ in this class of Hibi rings, we have $\mathfrak p^{(dn)} \subseteq \mathfrak{p}^n$ for all $\mathfrak p \in \Spec R$, where $d = \dim(R)$. Further, we demonstrate that \emph{all} Hibi rings over fields of positive characteristic are 2-diagonally $F$-regular, and that the simplest Hibi ring not contained in the above class is not 3-diagonally $F$-regular in any characteristic. The former implies that $\mathfrak p^{(2d)} \subseteq \mathfrak p^2$ for all $\mathfrak p \in \Spec R$.
\end{abstract}

\maketitle

\section{Introduction}
In \cite{ELSsymbolicPowers}, Ein, Lazarsfeld, and Smith show that regular affine $\bC$-algebras satisfy the \emph{Uniform Symbolic Topology} property (USTP). Concretely, given a regular affine $\bC$-algebra $R$ of dimension $d$, they show that 
\begin{equation}
  \mathfrak p^{(dn)} \subseteq \mathfrak p^n \tag{$\star$} \label{eq:mainContainment}
  \end{equation}
  for all $\mathfrak p \in \Spec R$ and all $n > 0$, where $\mathfrak p^{(m)}$ denotes the $m$th symbolic power of $\mathfrak p$. 
By that point, Swanson had shown in \cite{SwansonSymbolicPowers} that for any prime ideal $\mathfrak p$ in a regular ring\footnote{More precisely, Swanson's result holds in any Noetherian ring for any ideal $I$ such that the $I$-symbolic and $I$-adic topologies are equivalent, though it is now known to hold for any prime ideal in any complete local domain; see \cite[Proposition 2.4]{HunekeKatzValidshtiUSTPFiniteExtensions}}, there exists a number $h$, depending on $\mathfrak p$, so that $\mathfrak p^{(hn)} \subseteq \mathfrak p^n$ for all $n$. Thus the most striking aspect of Ein--Lazarsfeld--Smith's theorem is its uniformity---essentially, that we could take $h$ independent of $\mathfrak p$. Later, Huneke, Katz, and Validashti showed that a similar uniformity holds in a large class of isolated singularities \cite{HunekeKatzValidashtiUniformEquivIsolatedSing}. This lead them to ask,

\begin{Question*}[\cite{HunekeKatzValidashtiUniformEquivIsolatedSing}]
Let $(R, \mathfrak m)$ be a complete local domain. Does there exist $h \geq 1$ such that $\mathfrak p^{(hn)} \subseteq \mathfrak p^n$ for all prime ideals $\mathfrak p$ and all $n\geq 1$?
\end{Question*}
\noindent In case the answer is affirmative, it is also desirable to have an explicit description of the number $h$. So far the answer is known in very few cases, including 2-dimensional rational singularities \cite{WalkerRationalSingSymbolicPowers}, invariant subrings of finite group actions on regular rings \cite{HunekeKatzValidshtiUSTPFiniteExtensions}, and Segre products of polynomial rings \cite{SmolkinCarvajal}; see \cite{DaoDestefaniGrifoHunekeSymbolicSurvey} for an excellent survey on this subject.  In this paper, we add to the story by answering the question in the affirmative for a large class of rings. 


We follow the approach of \cite{SmolkinCarvajal}. There, Carvajal-Rojas and the second named author describe class of rings in positive characteristic, called \emph{diagonally $F$-regular} rings, in which one can run a variant of Ein--Lazarsfeld--Smith's argument. Thus, prime ideals in a diagonally $F$-regular ring of dimension $d$ satisfy the containment \eqref{eq:mainContainment} for all $n \geq 1$. Using standard reduction mod $p$ techniques (\textit{cf.} \cite{HochsterHunekeTightClosureInEqualCharactersticZero}), their results can be used to show \eqref{eq:mainContainment} in characteristic 0. For instance, a toric ring over any field satisfies \eqref{eq:mainContainment} if it is diagonally $F$-regular over every field of sufficiently large characteristic. 

In this paper, we investigate which Hibi rings are diagonally $F$-regular. A Hibi ring is a kind of toric ring which can be associated to any finite poset.  These rings were first introduced by Hibi in \cite{Hibi}, who showed that they are Gorenstein if and only if the associated poset is pure.  We find an infinite class of Hibi rings which are diagonally F-regular. Our main result is the following.
\begin{theorema}[\textit{cf.} \autoref{thm:comparabletopnodes}]
Let $P$ be a finite poset and $\kay$ a field of positive characteristic. If the set of top nodes of $P$ is completely ordered, then the Hibi ring over $\kay$ associated to $P$, denoted  $\kay[\mathcal I(P)]$, is diagonally $F$-regular. It follows that $\mathfrak p^{(hn)} \subseteq \mathfrak p^n$ for all prime ideals $\mathfrak p$ of height $h$ in $L[\mathcal I (P)]$, where $L$ is any field. In particular, \eqref{eq:mainContainment} holds for all prime ideals $\mathfrak p$ in such rings.
\end{theorema}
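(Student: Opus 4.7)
The plan is to establish the diagonal $F$-regularity of $\kay[\mathcal I(P)]$ in positive characteristic by an explicit toric-combinatorial construction, and then to invoke the general machinery of \cite{SmolkinCarvajal} to deduce the symbolic power containments in all characteristics.

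First, I would unpack the toric structure: $\kay[\mathcal I(P)]$ is the semigroup ring on the monoid generated by the characteristic functions of the order ideals of $P$, whose rational cone and facet description can be read directly from the covering relations of $P$ (extended by $\hat 0$ and $\hat 1$). The tensor powers $\kay[\mathcal I(P)]^{\otimes n}$ are themselves toric, and the diagonal map arises from the diagonal embedding of monoids. In this toric setting, a splitting of an $F^e$-twist of the diagonal that sends a prescribed element $c$ to $1$ is detected by a lattice point lying in the interior of an auxiliary polytope that depends on $c$, $e$, and $n$. Thus $n$-diagonal $F$-regularity reduces to showing that such interior lattice points exist for every $c$, every $e$, and every $n$.

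The main obstacle is the explicit construction of these lattice points, and this is where the hypothesis on top nodes enters. My expectation is that one labels the top nodes of $P$ as a chain $t_1 < t_2 < \cdots < t_k$ and uses this linear order to build, for each $n$, a candidate splitting whose exponent vector places large, nested weights on the top nodes in a compatible way across the $n$ tensor factors. The chain structure ensures that a single monotone assignment of weights can dominate any given $c$ on every factor simultaneously; if two top nodes were incomparable, the weight adjustments required for one would conflict with those for the other, consistent with the non-examples flagged in the abstract. The verification that the constructed lattice point lies in the \emph{interior} of the requisite polytope should then reduce to a direct inequality check against the facet description of the cone.

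Having established diagonal $F$-regularity in positive characteristic, the containment $\mathfrak p^{(hn)} \subseteq \mathfrak p^n$ for every prime $\mathfrak p$ of height $h$ in $\kay[\mathcal I(P)]$ follows at once from the Ein--Lazarsfeld--Smith style argument carried out in \cite{SmolkinCarvajal}. To obtain the statement over an arbitrary field $L$, I would apply standard reduction mod $p$: the Hibi ring is defined over $\bZ$ and its symbolic and ordinary powers are compatible with base change, so the containment in sufficiently large positive characteristic specializes to the same containment over $L$ via the methods of \cite{HochsterHunekeTightClosureInEqualCharactersticZero}. The special case $h = d$ then recovers the containment \eqref{eq:mainContainment} in every characteristic.
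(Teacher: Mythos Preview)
Your outline correctly identifies that the problem reduces to a toric/combinatorial one (this is the content of the paper's \autoref{thm:toricDn} and \autoref{Prop:combinatorialproperty}), and the final paragraph on deducing symbolic power containments over arbitrary fields is exactly right. However, the heart of the argument---the actual construction of the splitting---is missing. Saying that the chain of top nodes lets you assign ``large, nested weights on the top nodes in a compatible way'' is a hope, not a construction: the combinatorial problem in \autoref{Prop:combinatorialproperty} requires, for \emph{every} choice of residues $\alpha_{i,m}$, a system of integers $\delta_{i,m}$ satisfying inequalities along \emph{every} covering relation of $\bar P$, not just those at the top nodes, together with an exact sum condition $\sum_m \delta_{j,m} = N_j$. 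It is not at all clear how ``nesting weights along the chain of top nodes'' would produce such a $\delta$; the obstructions to finding $\delta$ are global, and a direct construction for a general such poset appears genuinely difficult.

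The paper sidesteps this difficulty entirely by a structural induction rather than a direct construction. It proves two closure properties: (i) if $\kay[\mathcal I(P)]$ is diagonally $F$-regular and $P'$ is obtained from $P$ by adjoining a new element covering a \emph{unique} element of $P$, then $\kay[\mathcal I(P')]$ is again diagonally $F$-regular (\autoref{thm:notopnodes}, proved via \autoref{Prop:combinatorialproperty} by extending an existing $\delta$ to the new vertex); and (ii) tensor products of diagonally $F$-regular rings are diagonally $F$-regular (\cite[Proposition~5.5]{SmolkinCarvajal}), which on the poset side corresponds to the ordinal sum $\bar P \oplus P'$. The proof of \autoref{thm:comparabletopnodes} then shows, by induction on the number of top nodes, that any poset whose top nodes form a chain can be assembled from the empty poset using only these two operations: one peels off the maximal top node $v$, writes $P$ as $(\overline{S_{>v}} \oplus S_{<v})$ with the incomparable elements $S_{\not\sim v}$ adjoined one at a time (each covering a unique element, since $S_{\not\sim v}$ contains no top nodes). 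This decomposition is where the chain hypothesis is actually used, and it is considerably more indirect than the head-on attack you propose.
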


\noindent 
In particular, as Segre products of polynomial rings are also the Hibi rings associated to a poset with two incomparable chains of elements (and hence have no top nodes), we recover \cite[Theorem 5.6]{SmolkinCarvajal}.  The above result also gives many new examples of singular toric rings which are diagonally $F$-regular in arbitrarily large dimension---the dimension of a Hibi ring is one more than the number of elements in the associated poset. More generally, our result supports the philosophy that algebraic and geometric properties of Hibi rings can often be read off directly from properties of the associated poset.

Diagonal $F$-regularity of a ring of positive characteristic is defined in terms of the properties of the set of $p^{-e}$-linear (or $F$-splitting type) maps which lift over the $n$-fold diagonal embedding for all positive integers $n$.  In particular, in the language of \cite{SmolkinCarvajal}, a ring is diagonally $F$-regular if and only if it is $n$-diagonally $F$-regular for all $n$.
In \cite[Section 6]{Smolkin}, the second author investigated the combinatorial conditions required of affine toric rings to be 2-diagonally $F$-regular.  In particular, from this characterization it was previously known that there are affine toric rings that are not 2-diagonally $F$-regular.  We are able to show this is not the case for Hibi rings.

\begin{theoremb}[\textit{cf.} \autoref{thm:hibi2dfr}]
All Hibi rings are 2-diagonally $F$-regular in positive characteristic. It follows that $\mathfrak p^{(2h)} \subseteq \mathfrak p^2$ for any prime ideal $\mathfrak p$  of height $h$ in any Hibi ring. 
\end{theoremb}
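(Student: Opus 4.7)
The plan is to apply the combinatorial characterization of 2-diagonal $F$-regularity for affine toric rings developed in \cite[Section 6]{Smolkin}. Once 2-diagonal $F$-regularity is established in positive characteristic, the containment $\mathfrak{p}^{(2h)} \subseteq \mathfrak{p}^2$ follows immediately from the symbolic power argument of \cite{SmolkinCarvajal} applied in the case $n=2$, with reduction modulo $p$ (\`a la \cite{HochsterHunekeTightClosureInEqualCharactersticZero}) handling any residual characteristic-zero or mixed-characteristic issues flagged in the introduction.

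First, I would recall the toric presentation of the Hibi ring $\kay[\mathcal I(P)]$ as the affine semigroup ring attached to the cone over the order polytope of $P$. Its monomial generators are indexed by order ideals of $P$, and the defining Hibi relations $x_I x_J = x_{I \cap J} x_{I \cup J}$ encode the distributive lattice structure on $\mathcal I(P)$. The facets of the defining cone admit a clean combinatorial description via covering relations in the augmented poset $\widehat P = P \cup \{\hat 0, \hat 1\}$, with each facet normal recording a simple incidence condition on order ideals. The main step is then to verify the criterion of \cite[Section 6]{Smolkin} directly using this description. The key structural feature making the $n=2$ case uniformly tractable across \emph{all} Hibi rings, in contrast to arbitrary toric rings, is precisely the distributive lattice structure on $\mathcal I(P)$: given any pair of generators $x_I, x_J$, one application of a Hibi relation rewrites $x_I x_J = x_{I \cap J} x_{I \cup J}$, replacing an arbitrary pair with a pair drawn from a chain in $\mathcal I(P)$. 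Reducing to comparable pairs in this way turns the verification of the 2-diagonal criterion into a chain-by-chain check against the facet inequalities coming from covers in $\widehat P$.

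The main obstacle lies in the bookkeeping: carefully tracking the interaction between the 2-fold diagonal embedding, the facet normals coming from covering relations in $\widehat P$, and the exponent data produced by the straightening above, to confirm that the required lattice point lies in the appropriate translated cone. However, no essentially new combinatorial ideas beyond this meet-and-join straightening should be needed for $n = 2$; and indeed, the paper subsequently exhibits explicit Hibi rings that fail to be $3$-diagonally $F$-regular, which is consistent with the fact that the argument sketched here is genuinely specific to the $n=2$ setting, where every pair of generators can be straightened in a single step.
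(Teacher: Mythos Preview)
Your proposal has a genuine gap: the mechanism you propose for verifying the criterion of \cite[Section 6]{Smolkin} does not match what that criterion actually asks for, and the idea that does the work in the paper is absent from your sketch.

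Concretely, for $n=2$ the criterion (in the form the paper uses, coming from \cite[Theorem 6.4]{Smolkin}) requires showing that for all sufficiently small $\delta$ the region $P^\circ_{-K_R} \cap (\delta - P^\circ_{-K_R})$ covers $\bR^{d+1}$ under integer translations. For a Hibi ring on $P=\{v_1,\dots,v_d\}$, the facet normals are $e_i-e_j$ for covers $v_i\lessdot v_j$ in $\bar P$ together with $e_i$ for maximal $v_i$, so the set to be shown to tile is
\[
S=\bigl\{(x_0,\dots,x_d): -1<x_i-x_j<1-\varepsilon \text{ for } v_i\lessdot v_j,\ -1<x_i<1-\varepsilon \text{ for } v_i \text{ maximal}\bigr\}
\]
for some fixed small $\varepsilon>0$. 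The problem is thus about integer-translating an \emph{arbitrary} point $(x_0,\dots,x_d)\in\bR^{d+1}$ into $S$; it is not a statement about products of monomial generators of $R$. The Hibi relation $x_Ix_J=x_{I\cap J}x_{I\cup J}$ rewrites products of generators, but it does not touch a generic point of $\bR^{d+1}$ (or of $\tfrac{1}{q}\bZ^{d+1}$), and there is no evident way to reduce the tiling problem to one about comparable pairs of order ideals. Your sentence ``no essentially new combinatorial ideas beyond this meet-and-join straightening should be needed'' is exactly where the argument is missing.

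The paper's proof supplies the missing idea, and it has nothing to do with the lattice structure on $\mathcal I(P)$. One observes that the constraints defining $S$ depend only on the images of the $x_i$ in $\bR/\bZ$, views these $d+1$ images as points on a unit-circumference circle, and applies pigeonhole: some arc between consecutive images has length at least $1/(d+1)$. Cutting the circle at the midpoint of that arc and lifting to an interval of length $1$ produces integer translates $x_i'$ with $|x_i'-x_j'|\le 1-\tfrac{1}{2(d+1)}$ for all $i,j$ and $-1<x_i'<1-\tfrac{1}{2(d+1)}$, which lands in $S$ for $\varepsilon=\tfrac{1}{3(d+1)}$. This argument uses only that the facet normals are of the form $e_i-e_j$ or $e_i$; the order-theoretic structure of $P$ beyond that plays no role.
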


In light of the above theorem, it is natural to ask whether the restrictions on the poset in the hypothesis of Theorem A are actually required, or rather if all Hibi rings might be diagonally $F$-regular.  Indeed, there are other reasons one might expect this to hold.  It is shown in \cite[Proposition 5.2]{SmolkinCarvajal} that the class group of a (local) diagonally $F$-regular ring must be torsion free, a property that all Hibi rings have but that the more general class of affine toric rings do not.  Nonetheless, we are also able to demonstrate that the Hibi ring of the simplest poset with incomparable top nodes is not diagonally $F$-regular, suggesting that the characterization of diagonally $F$-regular Hibi rings in Theorem A may be exhaustive. 
\begin{theoremc}
[\textit{cf.} \autoref{ex:not3diagFreg}]
There exists a four element poset $P$ for which the associated Hibi ring is not 3-diagonally $F$-regular in any characteristic.
\end{theoremc}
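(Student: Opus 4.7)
The plan is to exhibit an explicit four-element poset $P$ whose Hibi ring fails to be 3-diagonally $F$-regular, and to verify this failure via a direct combinatorial computation. In view of Theorem A (which handles posets with totally ordered top nodes) and of the fact that disjoint unions of chains give rise to Segre products of polynomial rings (already known to be diagonally $F$-regular by \cite{SmolkinCarvajal}), the first task is to choose $P$ to be a \emph{connected} poset on four elements with incomparable top nodes. Natural candidates are the ``N-poset'' (with cover relations $a<b$, $c<b$, $c<d$) and the ``Y-poset'' (with $a<b$, $a<c$, $a<d$); we expect one of these to suffice. Once $P$ is fixed, the Hibi ring $R = \kay[\mathcal{I}(P)]$ is a five-dimensional affine toric ring whose semigroup is explicitly generated by the characteristic vectors of the order ideals of $P$.

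The main tool is the combinatorial characterization of $n$-diagonal $F$-regularity for affine toric rings from \cite[Section 6]{Smolkin}. For a toric ring $\kay[S]$ with defining cone $\sigma^\vee$, this characterization expresses $n$-DFR in terms of whether certain rational lattice points lie in the strict interior of an explicit polytope cut out by $\sigma^\vee$ and the $n$-fold diagonal embedding $R \hookrightarrow R^{\otimes n}$. In particular, failure of $n$-DFR reduces to producing a rational point that must lie in that strict interior but does not, and such an obstruction is purely combinatorial, hence independent of the characteristic of the base field. This reduces Theorem C to a finite polytope computation, with no characteristic-specific input needed.

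Carrying out that computation comprises the remaining steps: (i) enumerate the order ideals of $P$ and write down the generators of the semigroup of $R$ together with its cone $\sigma^\vee$; (ii) describe the subcone or slice corresponding to $p^{-e}$-linear maps that lift along the $3$-fold diagonal; (iii) exhibit a specific lattice point, face, or splitting-type map that witnesses failure of the interior containment demanded by Smolkin's criterion. The main obstacle is step (iii), namely identifying the correct witness. The intuition, suggested by the analogous toric computations in \cite{Smolkin, SmolkinCarvajal}, is that the presence of two incomparable top nodes in $P$ forces a ``diagonal'' lattice point to be pinned to the boundary of the relevant polytope as soon as $n \geq 3$, whereas a single top node, or a totally ordered set of top nodes as in Theorem A, keeps it in the strict interior. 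Once this witness is located and its boundary-membership verified, the characteristic-independence of the conclusion follows immediately from the combinatorial nature of the criterion.
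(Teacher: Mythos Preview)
Your proposal has a genuine gap at the very first step: both of your candidate posets are in fact diagonally $F$-regular by Theorem~A, so neither can serve as the required counterexample. Recall that a \emph{top node} of $P$ is an element $v_j \in P$ for which there is more than one $v_i \in \bar P$ with $v_i \lessdot v_j$. In your N-poset (cover relations $a<b$, $c<b$, $c<d$), only $b$ covers two elements, so the set of top nodes is the singleton $\{b\}$, trivially totally ordered; Theorem~A then gives diagonal $F$-regularity. In your Y-poset ($a<b$, $a<c$, $a<d$), each of $b,c,d$ covers only $a$, and $a$ covers only $-\infty$ in $\bar P$, so there are no top nodes at all; again Theorem~A (indeed already \autoref{thm:notopnodes}) applies. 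The poset actually used in the paper is the ``square'' $\{v_1,v_2,v_3,v_4\}$ with $v_1,v_2$ minimal, $v_3,v_4$ maximal, and $v_i < v_j$ for $i\in\{1,2\}$, $j\in\{3,4\}$; here $v_3$ and $v_4$ are two \emph{incomparable} top nodes, and this is the minimal such configuration.

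Beyond the wrong poset, your step~(iii) is only a hope, not an argument. The paper's approach is to use the explicit combinatorial criterion of \autoref{Prop:combinatorialproperty} (which is the Hibi-ring specialization of the toric criterion you cite, extended to $n>2$ in the appendix): one fixes $z = t^2 x_1 x_2$ and exhibits, for every $q>2$, concrete values $\alpha_{i,m}\in[0,q-1]$ (see \autoref{fig:4nodes_eg}) for which no integers $\delta_{i,m}$ satisfying conditions (a)--(c) exist. Verifying the nonexistence of such $\delta_{i,m}$ is a short case check, and its validity for all $q>2$ is what yields characteristic independence. Your outline correctly identifies that the obstruction should be combinatorial and characteristic-free, but you will need both the right poset and an explicit witness of this shape to complete the proof.
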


\noindent
In order to arrive at our example, it is necessary to build on \cite[Theorem 6.4]{Smolkin} and make explicit the combinatorial conditions required of an affine toric ring to be $n$-diagonally $F$-regular for $n > 2$.  The techniques used here are common in the study of toric rings and the proof follows along the same lines as \cite[Section 6]{Smolkin}.  As such, and also because there is nothing involved is specific to the Hibi rings central to our main results, we have relegated this material to an appendix.  Unlike the case where $n=2$, there does not seem to be an immediate interpretation of the associated combinatorial condition in terms of tiling conditions, and so it seems to be far more difficult to verify in practice.

To our knowledge, the above example is the first to show that a 2-diagonally $F$-regular ring need not be diagonally F-regular, even for affine toric rings.  It remains an open question what additional properties one might impose to guarantee that an affine toric ring is diagonally $F$-regular.  From our example, we conclude that it is not sufficient to ask for 2-diagonal $F$-regularity and torsion free class group, or even for the $F$-signature to be somewhat large (11/30) with terminal singularities.

The authors would like to thank Javier Carvajal-Rojas and Karl Schwede for helpful conversations and support related to this project.

\section{Background}
Here we explain some preliminaries on Cartier algebras and Hibi rings. In this section, we use the following setting: 
\begin{setting}
$\kay$ is a field of characteristic $p > 0$. Given $e \geq 0$, we use the shorthand $q \coloneqq p^e$. $R$ is a domain and a $\kay$ algebra essentially of finite type. 
\label{setting:bg}
\end{setting}
  As $R$ is a domain, we can define $R^{1/q}$ to be the set of $q^{\textrm{th}}$ roots of elements of $R$ in a fixed algebraic closure of $\operatorname{frac}(R)$. As $R$ has characteristic $p$, each element $r\in R$ has a \emph{unique} $q^\textrm{th}$ root $x^{1/q}$ in $R^{1/q}$. This means that $R^{1/q}$ is canonically isomorphic to $R$ \emph{as a set}, though not as an $R$-module. In any case, this means that $(\cdot)^{1/q}$ is a functor: given a map $\vp: R \to S$, we define 
  \[
   \vp^{1/q}: R^{1/q} \to S^{1/q}, \quad \vp^{1/q}(r^{1/q}) \coloneqq \vp(r)^{1/q}
  \]
  
  An object of intense study in the field of positive characteristic commutative algebra is the set of maps $\Hom_R(R^{1/q}, R)$. We equip this set with left and right $R$-module structures. Namely, for $r\in R$ and $\vp \in \Hom_R(R^{1/q}, R)$, we define $(r \cdot \vp)(x) \coloneqq r\vp(x)$, and $(\vp \cdot r)(x) \coloneqq \vp(r^{1/q}x)$. Note that $r \cdot \vp = \vp \cdot r^q$. 
  
  \begin{definition}
  By a \emph{$p^{-e}$-linear map on $R$}, we mean an element of $\Hom_R(R^{1/q}, R)$.
  \end{definition}
  The above terminology comes from the fact that an element of $\Hom_R(R^{1/q}, R)$ is the same as a map of Abelian groups $\vp: R \to R$ satisfying the \emph{$p^{-e}$-linearity} property, namely $\vp(r^{p^e}x) = r \vp (x)$.

\begin{definition}[Cartier Algebras]
Work in \autoref{setting:bg}. A \emph{Cartier algebra} $\sC$ over $R$ (or \emph{Cartier $R$-algebra}) is an $\mathbb{N}$-graded unitary  ring\footnote{Not necessarily commutative.} $\bigoplus_{e\in \bN} \sC_e$   such that $\sC_0 = R$, equipped with a graded finitely generated $R$-bimodule structure so that $a \cdot \kappa = \kappa \cdot a^q$ for all $\kappa \in \sC_e$. Note that, strictly speaking, $\sC$ is not an $R$-algebra, as $R$ is not in the center of $\sC$. 
\end{definition}
Any ring in our setting comes with a canonical choice of Cartier algebra, $\sC_R$. Namely, we define
\[
   \sC_{e, R} \coloneqq \Hom_R(R^{1/q}, R)
\]
for any $e\geq 0$. Then the Cartier algebra $\sC_R \coloneqq \bigoplus_{e \in \bN} \sC_{e, R}$ is called the \emph{full Cartier algebra on $R$}. We define multiplication in $\sC$ by setting
\[
  \varphi \cdot \psi \coloneqq \varphi \circ \left(  \psi^{1/p^d} \right)
\]
for any $\varphi \in \sC_{d, R}$ and $\psi \in \sC_{e, R}$. Multiplication is defined for non-homogeneous elements of $\sC$ by distributivity. Throughout this work, we will only be concerned with subalgebras on $R$ contained in $\sC$. Thus,  we are only concerned with Cartier algebras in the sense of \cite{SchwedeNonQGorTestIdeals}.

The Cartier algebras we are mainly interested in are the so-called \emph{diagonal Cartier algebras}. These are the Cartier algebras first defined in \cite{Smolkin} and \cite{SmolkinCarvajal} in order to get a subadditivity formula on non-regular rings. 

\begin{definition}[{\cite{SmolkinCarvajal}, \cf \cite[Notation 3.7]{Smolkin}}] \label{def.diagCartAlg}
Let $R$ be a $\kay$-algebra. For $n\in \bN$ we define the \emph{$n$-th diagonal Cartier algebra of $R/\kay$}, denoted by $\mathcal{D}^{(n)}(R)$, as follows. In degree $e$,  $\sD^{(n)}_{e}(R) \subset \sC_{e,R}$ consists of $R$-linear maps $\varphi \: R^{1/q} \to R$ such that there is an $R$-linear lifting $\widehat{\varphi}\:  (R^{\otimes n})^{1/q} \to R^{\otimes n}$ making the following diagram commutative:
\begin{equation*} 
\xymatrix{
(R^{\otimes n})^{1/q} \ar[r]^-{\widehat{\varphi}} \ar[d]_-{\Delta_n^{1/q}} & R^{\otimes n}\ar[d]^-{\Delta_n}\\ 
 R^{1/q} \ar[r]^-{\varphi} & R
}
\end{equation*}
Here, $\Delta_n$ is the canonical multiplication map. We say that  a map $\widehat \vp$ as above is \emph{compatible with the diagonal}. 
\end{definition}

It is straightforward to verify $\sD^{(n)}(R)$ is a Cartier subalgebra of $\sC_R$, see \cite[Proposition 3.2]{Smolkin} and \cite[Definition 2.10]{Fsigpairs1}. When the ring $R$ is clear from context, we will refer to this Cartier algebra simply as $\sD^{(n)}$. 
\begin{definition}
Let $n \geq 1$. A ring $R$ as in \autoref{setting:bg} is said to be \emph{$n$-diagonally $F$-regular} if the following holds: for all $x\in R$ there exists some $e$ and some $\vp \in \sD^{(n)}_e(R)$ with $\vp(x^{1/q}) = 1$
\end{definition}

It turns out we don't need to check all $x$ in the above definition. The following proposition tells us we only need to check a single $x$ if we have some more information about $x$:

\begin{proposition}
Let $R$ be as in \autoref{setting:bg} and suppose $y \in R$ is an element with $R_y$ smooth.\footnote{For instance, if $\kay$ is perfect then this is the same as saying $R_y$ is regular.} Then $R$ is $n$-diagonally $F$-regular if there exists some $e$ and some $\vp \in \sD^{(n)}_e(R)$ with $\vp(y^{1/q}) = 1$
\end{proposition}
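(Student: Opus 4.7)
Fix a nonzero $x \in R$; the goal is to produce $\chi \in \sD^{(n)}_{e_x}(R)$ with $\chi(x^{1/p^{e_x}}) = 1$, which will suffice for $n$-diagonal $F$-regularity. The strategy adapts the classical argument that a single splitting at a smooth point suffices for strong $F$-regularity: use strong $F$-regularity of $R_y$ to obtain a splitting of $x$ after inverting $y$, descend back to $R$ at the cost of a power of $y$, and then use iterates of $\vp$ to cancel that power.

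First I work on the $R_y$-side. Since $R_y$ is smooth, it is $F$-finite and regular, hence strongly $F$-regular; moreover, the multiplication $\Delta_n \colon R_y^{\otimes n} \to R_y$ admits an $R_y$-linear section, so every $R_y$-linear map $R_y^{1/q'} \to R_y$ lifts trivially over the diagonal---equivalently, $\sD^{(n)}(R_y) = \sC(R_y)$. Strong $F$-regularity therefore yields some $\psi \in \sD^{(n)}_{e'}(R_y)$ with $\psi((x/1)^{1/q'}) = 1$. Clearing denominators through the localization isomorphism $\sD^{(n)}_{e'}(R)[y^{-1}] \cong \sD^{(n)}_{e'}(R_y)$ produces $N \ge 0$ and $\psi' \in \sD^{(n)}_{e'}(R)$ with $\psi'(x^{1/q'}) = y^N$.

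Next I iterate $\vp$ to split high powers of $y$. Since $\sD^{(n)}$ is closed under Cartier algebra multiplication, the $m$-fold product $\vp^{(m)} \coloneqq \vp \cdot \vp \cdots \vp$ lies in $\sD^{(n)}_{em}(R)$, and a short induction on $m$ (using the decomposition $a_{m+1}/q^{m+1} = 1/q + a_m/q^{m+1}$ and pulling $y^{1/q} \in R^{1/q}$ out of $(\vp^{(m)})^{1/q}$) shows $\vp^{(m)}(y^{a_m/q^m}) = 1$, where $a_m \coloneqq 1 + q + \cdots + q^{m-1}$. Choose $m$ with $a_m \ge N$ and set $\tilde\vp \coloneqq \vp^{(m)} \cdot y^{a_m - N} \in \sD^{(n)}_{em}(R)$, so that by the right-multiplication convention $\tilde\vp((y^N)^{1/q^m}) = \vp^{(m)}(y^{a_m/q^m}) = 1$. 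The desired splitting is then the Cartier composition $\chi \coloneqq \tilde\vp \cdot \psi' \in \sD^{(n)}_{em + e'}(R)$:
\[
\chi(x^{1/(q^m q')}) = \tilde\vp\bigl(\psi'(x^{1/q'})^{1/q^m}\bigr) = \tilde\vp\bigl((y^N)^{1/q^m}\bigr) = 1.
\]

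The step I expect to require the most care is the localization isomorphism $\sD^{(n)}_{e'}(R)[y^{-1}] \cong \sD^{(n)}_{e'}(R_y)$. The standard fact that $\Hom$ commutes with localization in the $F$-finite setting handles the underlying modules of $p^{-e'}$-linear maps, but one must also check that the diagonal-lift condition is preserved in both directions: any lift over $\Delta_n$ upstairs restricts to one downstairs by exactness of localization, while any lift on $R_y^{\otimes n}$ descends, after multiplying by a sufficiently high power of $y$, to one on $R^{\otimes n}$, using that the $n$-fold tensor product commutes with inverting $y$.
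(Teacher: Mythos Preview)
Your overall strategy is correct and is precisely what the paper is invoking: the paper's proof consists of the single sentence ``combine \cite[Proposition~2.4]{SmolkinCarvajal} and \cite[Proposition~5.1]{SmolkinCarvajal},'' and what you have written is essentially a reconstruction of those two propositions (localization of $\sD^{(n)}$ and diagonal $F$-regularity of smooth algebras) together with the standard Cartier-algebra composition trick. The iteration $\vp^{(m)}(y^{a_m/q^m})=1$ and the final composition are both fine.

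There is one genuine wrinkle in your justification on the $R_y$ side. You write that ``the multiplication $\Delta_n\colon R_y^{\otimes n}\to R_y$ admits an $R_y$-linear section, so every $R_y$-linear map $R_y^{1/q'}\to R_y$ lifts trivially over the diagonal.'' A section of $\Delta_n$ exists for \emph{any} ring (send $r$ to $r\otimes 1\otimes\cdots\otimes 1$), so this cannot be the reason smoothness matters. The point is that the lift $\widehat\psi$ in the definition of $\sD^{(n)}$ must be $R_y^{\otimes n}$-linear, i.e.\ an honest element of $\Hom_{R_y^{\otimes n}}\bigl((R_y^{\otimes n})^{1/q'},R_y^{\otimes n}\bigr)$, and the composite $s\circ\psi\circ\Delta_n^{1/q'}$ you implicitly have in mind is only $R_y$-linear. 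The correct argument is: since $R_y$ is smooth over $\kay$, so is $R_y^{\otimes n}$, hence it is regular; by Kunz, $(R_y^{\otimes n})^{1/q'}$ is then a projective $R_y^{\otimes n}$-module, so the $R_y^{\otimes n}$-linear map $\psi\circ\Delta_n^{1/q'}\colon (R_y^{\otimes n})^{1/q'}\to R_y$ lifts through the surjection $\Delta_n$. With this fix, the rest of your argument goes through unchanged.
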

\begin{proof}
This follows by combining \cite[Proposition 2.4]{SmolkinCarvajal} and \cite[Proposition 5.1]{SmolkinCarvajal}.
\end{proof}

\begin{definition}
A ring $R$ as in \autoref{setting:bg} is said to be \emph{diagonally $F$-regular} if $R$ is $n$-diagonally $F$-regular for all integers $n \geq 1$.
\end{definition}

Diagonally $F$-regular rings are precisely the rings in which we can run Ein--Lazarsfeld--Smith's argument using the subadditivity theory of \cite{Smolkin, SmolkinCarvajal}. Thus: 

\begin{proposition}[{\cite[Theorem 4.1]{SmolkinCarvajal}}]
Let $R$ be as in \autoref{setting:bg} and suppose that $R$ is diagonally $F$-regular. Let $\mathfrak p \subseteq R$ be a prime ideal of height $h$. Then we have $\mathfrak p^{(hn)} \subseteq \mathfrak p^n$ for all $n \geq 1$. 
\end{proposition}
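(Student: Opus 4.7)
The plan is to transplant Ein, Lazarsfeld, and Smith's argument for regular rings \cite{ELSsymbolicPowers} into positive characteristic, using the test ideals $\tau(\sD^{(n)}, \mathfrak{a}^t)$ attached to the diagonal Cartier algebras $\sD^{(n)}$ in place of asymptotic multiplier ideals. The conceptual point is that the compatibility condition with the $n$-fold diagonal $\Delta_n$ in the definition of $\sD^{(n)}$ is engineered precisely to yield a subadditivity formula whose right hand side carries an $n$-th power---the one ingredient absent from the usual Hochster--Huneke test-ideal formalism on non-regular rings.

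First, I would recall from \cite{SchwedeNonQGorTestIdeals} the construction of test ideals $\tau(\sD, \mathfrak{a}^t)$ associated to any Cartier subalgebra $\sD \subseteq \sC_R$, any ideal $\mathfrak{a} \subseteq R$, and any exponent $t \geq 0$: these are monotone in $\sD$ and in $\mathfrak{a}$, are compatible with localization, and when $\sD = \sC_R$ they recover the usual Hochster--Huneke test ideal. By construction, $R$ is diagonally $F$-regular exactly when $\tau(\sD^{(n)}, R) = R$ for every $n$, so the hypothesis of the proposition enters the proof only as a guarantee that the diagonal test ideals are as large as possible. With this setup in place, the key technical input is the subadditivity formula
\[
  \tau\!\bigl(\sC_R,\, \mathfrak{a}_1 \cdots \mathfrak{a}_n\bigr) \;\subseteq\; \tau\!\bigl(\sD^{(n)}, \mathfrak{a}_1\bigr) \cdots \tau\!\bigl(\sD^{(n)}, \mathfrak{a}_n\bigr),
\]
whose proof is a diagram chase exploiting the lifting $\widehat\vp\colon (R^{\otimes n})^{1/q} \to R^{\otimes n}$ associated to each $\vp \in \sD^{(n)}_e$: any generator of the left hand side is realized as $\Delta_n \circ \widehat\vp$ applied to a tensor $a_1^{1/q} \otimes \cdots \otimes a_n^{1/q}$ with $a_i \in \mathfrak{a}_i$, from which the right hand side can be read off tensor-factor-by-tensor-factor.

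To complete the proof, I would sandwich this subadditivity inclusion between two standard containments, applied with $\mathfrak{a}_1 = \cdots = \mathfrak{a}_n = \mathfrak{p}^{(h)}$. On one side, a local computation at $\mathfrak{p}$ yields $\mathfrak{p}^{(hn)} \subseteq \tau(\sC_R, (\mathfrak{p}^{(h)})^n)$, using that both ideals localize at $\mathfrak{p}$ to the same power of $\mathfrak{p} R_\mathfrak{p}$ (together with a Brian\c{c}on--Skoda bound in $R_\mathfrak{p}$). On the other side, the inclusions $\tau(\sD^{(n)}, \mathfrak{p}^{(h)}) \subseteq \tau(\sC_R, \mathfrak{p}^{(h)}) \subseteq \overline{\mathfrak{p}^{(h)}} \subseteq \overline{\mathfrak{p}} = \mathfrak{p}$---which hold because $\sD^{(n)} \subseteq \sC_R$, Hochster--Huneke test ideals lie in integral closures, and primes are integrally closed---yield $\tau(\sD^{(n)}, \mathfrak{p}^{(h)})^n \subseteq \mathfrak{p}^n$. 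Chaining the three inclusions gives the desired $\mathfrak{p}^{(hn)} \subseteq \mathfrak{p}^n$.

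The principal obstacles are the subadditivity formula itself, whose diagram chase is conceptually clean but whose test-ideal bookkeeping (tracking $\mathfrak{a}^{\lceil tq \rceil}$, colon ideals, and so on) demands some care; and the local height computation at $\mathfrak{p}$, where one must show that $R_\mathfrak{p}$ is well-behaved enough for ordinary and symbolic powers of $\mathfrak{p} R_\mathfrak{p}$ to be compared via Brian\c{c}on--Skoda. It is here that diagonal $F$-regularity plays its essential role beyond mere strong $F$-regularity.
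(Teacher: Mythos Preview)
The paper does not prove this proposition; it merely cites \cite[Theorem~4.1]{SmolkinCarvajal}, remarking just before the statement that diagonally $F$-regular rings are ``precisely the rings in which we can run Ein--Lazarsfeld--Smith's argument using the subadditivity theory of \cite{Smolkin, SmolkinCarvajal}.'' Your overall plan---subadditivity for $\tau(\sD^{(n)},-)$ sandwiched between a strong-$F$-regularity containment on the left and a Brian\c con--Skoda containment on the right---is exactly that strategy, so at the level of architecture you are on target.

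Two of your supporting steps, however, are misstated. First, the chain $\tau(\sC_R,\mathfrak p^{(h)}) \subseteq \overline{\mathfrak p^{(h)}} \subseteq \overline{\mathfrak p}$ is wrong: test ideals contain (rather than are contained in) the integral closure of the ideal when $R$ is strongly $F$-regular---already for $\mathfrak a = (x,y)^2$ in $\kay[x,y]$ one has $\tau(\mathfrak a) = (x,y) \not\subseteq \overline{\mathfrak a} = (x,y)^2$. The containment $\tau(\sC_R,\mathfrak p^{(h)}) \subseteq \mathfrak p$ that you actually need comes from localizing at $\mathfrak p$, where $\mathfrak p^{(h)} R_{\mathfrak p} = (\mathfrak p R_{\mathfrak p})^h$ and $\dim R_{\mathfrak p} = h$, and then applying Skoda/Brian\c con--Skoda for test ideals; since $\mathfrak p$ is prime, checking the containment after this single localization suffices. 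You invoke Brian\c con--Skoda, but on the wrong side of the sandwich.

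Second, with your choice $\mathfrak a_i = \mathfrak p^{(h)}$ the left-hand inclusion $\mathfrak p^{(hn)} \subseteq \tau\bigl(\sC_R,(\mathfrak p^{(h)})^n\bigr)$ does not follow from a local computation at $\mathfrak p$: one must check it at every $\mathfrak q \supseteq \mathfrak p$, and strong $F$-regularity there only gives $(\mathfrak p^{(h)})^n R_{\mathfrak q} \subseteq \tau\bigl(R_{\mathfrak q},(\mathfrak p^{(h)})^n R_{\mathfrak q}\bigr)$, which need not contain the possibly larger $\mathfrak p^{(hn)} R_{\mathfrak q}$. The argument in \cite{SmolkinCarvajal} avoids this by using real exponents (equivalently, the graded family of symbolic powers): apply subadditivity to $\mathfrak a = \mathfrak p^{(hn)}$ with $t_1 = \cdots = t_n = 1/n$ to obtain
\[
\mathfrak p^{(hn)} \;\subseteq\; \tau\bigl(\sC_R,\mathfrak p^{(hn)}\bigr) \;\subseteq\; \tau\bigl(\sD^{(n)},(\mathfrak p^{(hn)})^{1/n}\bigr)^n \;\subseteq\; \mathfrak p^n,
\]
where the first inclusion is strong $F$-regularity of $R$ and the last is the Brian\c con--Skoda step at $R_{\mathfrak p}$ described above.
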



We will construct examples of Hibi rings, which are toric rings defined by finite posets.  We recall the following definitions:

\begin{definition}
Given a poset $P$, a poset ideal $I \subset P$ is a subset of $P$ such that if $x \in I$ and $y \leq x$, then $y \in I$.  We denote the set of all poset ideals $\mathcal{I}(P)$.
\end{definition}

Given a poset with $n$ elements, the associated Hibi ring over $\kay$ is a subring of $\kay[t,x_1, \dots, x_n]$ defined by adjoining to $\kay$ a monomial for each poset ideal in $P$.

\begin{definition}
Given a finite poset $P = \{v_1, \dots, v_n\}$, the associated Hibi ring is:
\[
\kay[\mathcal{I}(P)]:= \kay\left[t\prod_{v_i \in I}x_i \Bigg| I \text{ is a poset ideal in P}\right]
\]
\end{definition}

We will denote by $\bar{P}$ the poset which is formed from $P$ by adding a unique minimal element $-\infty:= v_0$ and will identify this with $x_0:= t$.  In general, when drawing the poset associated to a Hibi ring, we will draw $\bar{P}$ rather than $P$. Note that  $x_0^{a_0} \cdots x_n^{a_n} \in \kay[\mathcal I(P)]$ if and only if $a_j \leq a_i$ whenever $v_i \lessdot v_j \in \bar{P}$.  

\section{Diagonal Cartier Algebras on Toric Varieties}
In \cite[Section 6]{Smolkin}, the second-named author finds a combinatorial description of the Cartier algebra $\sD^{(2)}(R)$, where $R$ is a toric ring in positive characteristic. As a consequence, the sets $\sD^{(2)}_e(R)$ are generated, as $\kay$-vector spaces, by maps sending monomials to monomials.  Before proceeding, we wish to show the analogous result for the Cartier algebras $\sD^{(n)}(R)$. 
\begin{setting}
  We let $\Sigma$ be a rational cone in $\bR^d$ and $R = R(\Sigma)$ the associated toric ring over a perfect field $\kay$ of characteristic $p > 0$. Set $X = \Spec R$. For all rays $\rho \subseteq \Sigma$, we let  $v_\rho$ denote the primitive generator of $\rho$. That is, $v_{\rho}$ is the shortest nonzero vector in $\bZ^d \cap \rho$.
 \label{setting:toric}
\end{setting}
Choose $e > 0$ and set $q = p^e$. Let $\mathcal T  = \kay[x_1^\pm, \ldots, x_d^\pm]$ be the coordinate ring of the $d$-dimensional torus over $\kay$. Then for each $a \in \frac{1}{q}\bZ^n$, we have a map $\pi_a: \mathcal T^{1/q} \to \mathcal T$ given by
  \[
    \pi_a(x^u) = \begin{cases}
      x^{a+u}, & a+u\in \bZ^d\\
      0, & \textrm{otherwise}
    \end{cases}
  \]
  \begin{lemma}[{\cite[Lemma 4.1]{PayneFrobeniusSplitToric}}]
    The set $\{ \pi_a \mid a \in \frac{1}{q}\bZ^d \}$ forms a $\kay$-vector space basis of  $\Hom_{\mathcal T}(\mathcal T^{1/q}, \mathcal T)$.
  \end{lemma}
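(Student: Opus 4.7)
My plan is to prove the lemma directly from the module structure of $\mathcal{T}^{1/q}$ over $\mathcal{T}$. The key observation is that $\mathcal{T}^{1/q} = \kay[x_1^{\pm 1/q}, \ldots, x_d^{\pm 1/q}]$ is a free $\mathcal{T}$-module of rank $q^d$, with a convenient basis given by the monomials $\{x^{b/q} : b \in B\}$, where $B$ is any fixed set of representatives of $\bZ^d / q\bZ^d$ (say $B = \{0, 1, \ldots, q-1\}^d$). Consequently, a $\mathcal{T}$-linear map $\varphi \in \Hom_\mathcal{T}(\mathcal{T}^{1/q}, \mathcal{T})$ is determined freely and uniquely by its values $\varphi(x^{b/q})$, and each such value is an arbitrary Laurent polynomial $\sum_{u \in \bZ^d} \lambda_{b,u}\, x^u$ with only finitely many $\lambda_{b,u} \in \kay$ nonzero.

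Next, I would unpack how the $\pi_a$'s act on this basis: for $a \in \frac{1}{q}\bZ^d$ and $b \in B$, $\pi_a(x^{b/q}) = x^{a+b/q}$ if $a + b/q \in \bZ^d$ and $0$ otherwise. Given $b \in B$, the set of $a$'s for which $a + b/q \in \bZ^d$ is exactly $\{u - b/q : u \in \bZ^d\}$, and on this set $\pi_{u - b/q}$ sends $x^{b/q}$ to $x^u$. Moreover, every $a \in \frac{1}{q}\bZ^d$ admits a unique decomposition $a = u - b/q$ with $u \in \bZ^d$ and $b \in B$ (the vector $b$ is just the componentwise residue of $-qa$ modulo $q$). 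Thus for fixed scalars $\{c_a\}_{a \in \frac{1}{q}\bZ^d}$ with finite support,
\[
\Bigl(\sum_{a} c_a\, \pi_a\Bigr)(x^{b/q}) = \sum_{u \in \bZ^d} c_{u - b/q}\, x^u.
\]

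With this in hand, both spanning and linear independence fall out simultaneously. To express a given $\varphi$ as a $\kay$-combination of $\pi_a$'s, I simply set $c_{u - b/q} \coloneqq \lambda_{b,u}$; the uniqueness of the $(u,b)$-decomposition of each $a$ ensures that this definition is well-posed and that only finitely many $c_a$'s are nonzero (there are only $q^d$ choices of $b$, and each $\varphi(x^{b/q})$ is a finite sum). Running the argument in reverse, if $\sum_a c_a \pi_a = 0$, then evaluating on each $x^{b/q}$ and comparing coefficients of the distinct monomials $x^u$ forces every $c_a = 0$.

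I do not anticipate any significant obstacle here: the whole argument amounts to noting that $\Hom_\mathcal{T}(\mathcal{T}^{1/q}, \mathcal{T}) \cong \bigoplus_{b \in B} \mathcal{T} \cdot (x^{b/q})^*$ as a $\kay$-vector space, and then matching the dual basis elements $(x^{b/q})^* \cdot x^u$ with the maps $\pi_{u - b/q}$. The only point that requires minor care is bookkeeping the bijection between $\frac{1}{q}\bZ^d$ and $\bZ^d \times B$, but this is immediate from the division algorithm applied componentwise.
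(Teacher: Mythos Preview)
Your argument is correct and is the standard proof of this fact. Note that the paper does not supply its own proof of this lemma; it is quoted directly from \cite[Lemma~4.1]{PayneFrobeniusSplitToric} as background, so there is nothing to compare against beyond observing that your approach is exactly the one underlying Payne's statement. The only implicit hypothesis worth flagging is that you use $\kay^{1/q} = \kay$ when identifying $\mathcal{T}^{1/q}$ with $\kay[x_1^{\pm 1/q}, \ldots, x_d^{\pm 1/q}]$, but this is guaranteed by the perfectness assumption in the ambient setting.
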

  For each \emph{torus-invariant} divisor $D$ on $X$, one can associate a polytope $P_D \subseteq \bR^d$. In the case of $D= -K_X$, the associated polytope is called the \emph{anticanonical polytope} of $X$, and is given by
  \[
    P_{-K_X} = \left\{ u \in \bR^d \mid \langle u, v_\rho \rangle  \geq -1 \textrm{ for all rays } \rho \subset \Sigma \right\}.
  \]
  We set  $P_{-K_X}^\circ$ to be the interior of $P_{-K_X}$. We have the following:
  \begin{proposition}[{\cite[Proposition 4.3]{PayneFrobeniusSplitToric}}]
    The set $\{ \pi_a \mid a \in \frac{1}{q}\bZ^d\cap P_{-K_X}^\circ \}$ forms a basis of  $\Hom_{R}(R^{1/q}, R)$ as a $\kay$-vector space. 
    \label{Prop:PayneProp}
  \end{proposition}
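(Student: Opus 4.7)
The plan is to identify $\Hom_R(R^{1/q}, R)$ as a subspace of $\Hom_{\mathcal{T}}(\mathcal{T}^{1/q}, \mathcal{T})$ via the torus inclusion $R \hookrightarrow \mathcal{T}$, and then to characterize exactly which of the basis elements $\pi_a$ from the preceding lemma restrict to $\Hom_R(R^{1/q}, R)$. The first step is a formal reduction: any $\vp \in \Hom_R(R^{1/q}, R)$ extends uniquely to a $\mathcal{T}$-linear map $\mathcal{T}^{1/q} \to \mathcal{T}$ after inverting all monomials, so the lemma expresses $\vp = \sum_a c_a \pi_a$ with finitely many nonzero $c_a$. Since distinct $\pi_a$ act on disjoint monomial supports, $\vp(R^{1/q}) \subseteq R$ if and only if each individual $c_a\pi_a$ already sends $R^{1/q}$ into $R$. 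The proposition thus reduces to the claim that $\pi_a(R^{1/q}) \subseteq R$ if and only if $a \in P_{-K_X}^\circ$.

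For the forward implication, recall that $R$ has a $\kay$-basis given by the monomials $x^u$ with $u \in \Sigma^\vee \cap \bZ^d$, characterized by $\langle u, v_\rho\rangle \geq 0$ for every ray $\rho$, and that $R^{1/q}$ has an analogous basis indexed by $\Sigma^\vee \cap \frac{1}{q}\bZ^d$. Assuming $a \in P_{-K_X}^\circ$, one has $\langle a, v_\rho\rangle > -1$ for every ray. For any $u \in \Sigma^\vee$ with $a+u \in \bZ^d$, the integer $\langle a+u, v_\rho\rangle$ is strictly greater than $-1$, hence $\geq 0$, so $a+u \in \Sigma^\vee \cap \bZ^d$ and $\pi_a(x^u) \in R$.

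For the converse, suppose $\langle a, v_\rho\rangle \leq -1$ for some ray $\rho$. The main obstacle is to produce a witness monomial: one must find $u \in \Sigma^\vee \cap \frac{1}{q}\bZ^d$ with $a+u \in \bZ^d$ but $a+u \notin \Sigma^\vee$. Writing $u = b - a$ with $b \in \bZ^d$, this reduces to the lattice-theoretic problem of finding $b \in \bZ^d$ satisfying $\langle b, v_\rho\rangle \leq -1$ while $\langle b, v_\sigma\rangle \geq \langle a, v_\sigma\rangle$ for every other ray $\sigma$. Such a $b$ can be built by starting from an integer vector $b_0$ with $\langle b_0, v_\rho\rangle = -1$, which exists by primitivity of $v_\rho$, and translating by a lattice point $w$ in the relative interior of the facet $\rho^\perp \cap \Sigma^\vee$; since any $v_\sigma$ with $\sigma \neq \rho$ pairs strictly positively with relative interior points of that facet, taking a large enough multiple of $w$ overcomes the finitely many constraints imposed by the remaining rays. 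This yields the desired contradiction and completes the characterization, hence the proposition.
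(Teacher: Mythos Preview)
The paper does not give its own proof of this proposition; it is simply quoted from \cite[Proposition 4.3]{PayneFrobeniusSplitToric} and used as a black box. So there is nothing in the paper to compare your argument against.

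Your argument is correct and is, in fact, essentially the standard one. Two small remarks. First, the phrase ``distinct $\pi_a$ act on disjoint monomial supports'' is slightly imprecise: maps $\pi_a$ and $\pi_{a'}$ with $a - a' \in \bZ^d$ act on the \emph{same} monomials of $\mathcal{T}^{1/q}$. What makes the reduction work is rather that $R$ is a monomial subspace of $\mathcal{T}$, so for any $x^u \in R^{1/q}$ the image $\vp(x^u) = \sum_{a + u \in \bZ^d} c_a\, x^{a+u}$ is a combination of \emph{distinct} monomials in $\mathcal{T}$, and this lies in $R$ if and only if each term does. Second, in the construction of the witness $b = b_0 + Nw$, you should make explicit that such a lattice point $w$ exists: $\rho^\perp \cap \Sigma^\vee$ is a facet of the rational cone $\Sigma^\vee$ because $\rho$ is an extremal ray of $\Sigma$, and any rational polyhedral cone of positive dimension contains a lattice point in its relative interior (for instance the sum of primitive generators of its rays). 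With these clarifications, the argument is complete.
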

  
The following is a characterization of the Cartier algebras $\sD^{(n)}(R)$ for a toric ring $R$. The proof is essentially the same as \cite[Theorem 6.4]{Smolkin}; we leave it for the appendix. 
\begin{theorem}[{C.f.~\cite[Theorem 6.4]{Smolkin}}]
  Work in \autoref{setting:toric} Set $n, e> 0$. Then $\sD^{(n)}_{e}(R)$ is generated as a $\kay$-vector space by the maps  $\pi_a \in \Hom_{R}(R^{1/q}, R)$ satisfying the following: for all sets of vectors $u_1, \ldots, u_n \in \frac{1}{q}\bZ^d$, there exist $v_i \in u_i + \bZ^d$ such that $v_i \in P_{-K_X}^\circ$ and $\sum_{i=1}^n v_i \in a - P_{-K_X}^\circ$. 
\label{thm:toricDn}
\end{theorem}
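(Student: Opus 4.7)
The plan is to adapt the argument of \cite[Theorem 6.4]{Smolkin}, which handles the case $n=2$, to arbitrary $n$. The first step is to identify $R^{\otimes n}$ as the toric ring associated to the product cone $\sigma^{\oplus n} \subset \bR^{nd}$; its anticanonical polytope is $(P_{-K_X})^n$, and the multiplication $\Delta_n \: R^{\otimes n} \to R$ is dual to the lattice summation $\vec w \mapsto |\vec w| := w_1 + \cdots + w_n$. Applying Proposition~\ref{Prop:PayneProp} to $R^{\otimes n}$ yields a $\kay$-basis $\{\pi_{\vec b} \: \vec b \in \tfrac{1}{q}\bZ^{nd} \cap (P_{-K_X}^\circ)^n\}$ of $\Hom_{R^{\otimes n}}((R^{\otimes n})^{1/q}, R^{\otimes n})$.

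Next, write $\varphi = \sum_a \mu_a \pi_a$ and a candidate lift $\widehat\varphi = \sum_{\vec b} \lambda_{\vec b} \pi_{\vec b}$ in their Payne bases. Evaluating the compatibility diagram from Definition~\ref{def.diagCartAlg} on an arbitrary monomial $x^{\vec w/q}$ with $\vec w \in S_{R^{\otimes n}}$ and matching coefficients on both sides produces a linear system in the $\lambda_{\vec b}$'s. Because the Payne basis is a torus weight basis---$\pi_a$ has weight $a$---and the compatibility respects the $\tfrac{1}{q}\bZ^d$-grading that $\Delta_n$ induces on the $\pi_{\vec b}$'s, this system decouples into an independent subsystem for each weight and each coset $\vec u \in \tfrac{1}{q}\bZ^{nd}/\bZ^{nd}$. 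Consequently $\varphi \in \sD^{(n)}_e(R)$ if and only if each individual $\pi_a$ with $\mu_a \ne 0$ belongs to $\sD^{(n)}_e(R)$, which immediately reduces the spanning statement to a question about single $\pi_a$'s.

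Solvability of the decoupled subsystem for $\pi_a$ then unwinds to: for every coset $\vec u \in (\tfrac{1}{q}\bZ^d)^n$ with $|\vec u| \equiv a \pmod{\bZ^d}$, there exists $\vec b \equiv \vec u \pmod{\bZ^{nd}}$ with $\vec b \in (P_{-K_X}^\circ)^n$ and $|\vec b| = a$; setting $v_i := b_i$ this reads ``for every such $\vec u$ there exist $v_i \in u_i + \bZ^d$ with $v_i \in P_{-K_X}^\circ$ and $\sum v_i = a$.'' The last step is to reconcile this with the theorem's form, which weakens $\sum v_i = a$ to $\sum v_i \in a - P_{-K_X}^\circ$ and ranges $\vec u$ over all of $(\tfrac{1}{q}\bZ^d)^n$. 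Given a weaker witness $\vec v$ in the relevant coset, the vector $c := a - \sum v_i$ lies in $\bZ^d \cap P_{-K_X}^\circ$; integrality upgrades each inequality $\langle c, v_\rho\rangle > -1$ to $\langle c, v_\rho\rangle \geq 0$, so $c \in S_R$, and adding $c$ to any single $v_i$ preserves both the polytope and coset conditions while adjusting the sum to equal $a$. The principal technical obstacle is tracking this decoupling and the coset-by-coset reduction cleanly; since the argument parallels \cite[Theorem 6.4]{Smolkin} with pairs replaced by $n$-tuples throughout, the requisite combinatorial bookkeeping is essentially mechanical.
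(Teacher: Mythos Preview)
Your approach parallels the paper's closely: both use Payne's monomial basis for $\Hom_{R^{\otimes n}}((R^{\otimes n})^{1/q}, R^{\otimes n})$, establish that the diagonal-compatibility condition decouples by torus weight (so that it suffices to treat each $\pi_a$ individually), and then unwind the existence of a lift into a combinatorial condition on cosets in $\tfrac{1}{q}\bZ^d/\bZ^d$. The paper carries this out via a chain of lemmas passing through the open torus $T^{\times n}$; your more direct route, viewing $R^{\otimes n}$ as the toric ring of the product cone and applying \autoref{Prop:PayneProp} there, is equally valid and arrives at the same intermediate condition.

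The gap is in your final reconciliation step. Your intermediate condition reads: for every $\vec u \in (\tfrac{1}{q}\bZ^d)^n$ with $|\vec u| \equiv a \pmod{\bZ^d}$, there exist $v_i \in (u_i + \bZ^d) \cap P_{-K_X}^\circ$ with $\sum v_i = a$. Your upgrade argument---absorbing $c = a - \sum v_i$ into one coordinate---requires $c \in \bZ^d$, hence only applies when $|\vec u| \equiv a$; you do not verify the theorem's condition for tuples with $|\vec u| \not\equiv a$, and in fact it cannot be deduced from your intermediate condition. The $n$-vector condition as stated in the theorem is literally the characterization of $\pi_a \in \sD^{(n+1)}$ (demanding $a - \sum_{i=1}^n v_i \in P_{-K_X}^\circ$ for $n$ freely chosen cosets is the $(n{+}1)$-fold lifting problem), and $\sD^{(n)} \supsetneq \sD^{(n+1)}$ in general---witness \autoref{ex:not3diagFreg}. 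The paper's appendix proof in fact arrives at exactly your intermediate condition, stated there in the equivalent $(n{-}1)$-vector form ``for all $u_1,\ldots,u_{n-1}$ there exist $v_i \in (u_i+\bZ^d)\cap P_{-K_X}^\circ$ with $a-\sum_{i\le n-1} v_i \in P_{-K_X}^\circ$,'' and likewise does not pass to the $n$-vector form. So you have correctly recovered what the paper's proof actually establishes; the discrepancy you flagged in your last step reflects an indexing slip in the theorem statement rather than a missing argument on your part.
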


The following consequence of \autoref{thm:toricDn} will be useful to us later. 

\begin{corollary}
Work in \autoref{setting:toric}. Let $\mathbf x^{a}$ be a monomial in $R$. If there is a map $\vp\in \sD^{(n)}_e(R)$ with $\vp(\mathbf x^{a/p^e}) = 1$, then $\pi_{-a/p^e} \in \sD^{(n)}_e$. 
\label{thm:monomialSplittings}
\end{corollary}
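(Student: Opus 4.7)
The plan is to combine the basis description of $\Hom_R(R^{1/q},R)$ given by \autoref{Prop:PayneProp} with the generation statement of \autoref{thm:toricDn}, and then exploit the linear independence of distinct monomials of $R$ inside the evaluation $\vp(\mathbf x^{a/p^e})$.

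First, I would use \autoref{thm:toricDn} to write
\[
\vp \;=\; \sum_{b} c_b\,\pi_b
\]
as a finite $\kay$-linear combination, where the sum ranges over $b \in \tfrac{1}{q}\bZ^d \cap P_{-K_X}^\circ$ each of which satisfies the combinatorial diagonal condition stated in that theorem (so that each $\pi_b$ already lies in $\sD^{(n)}_e(R)$). This expansion is unique because the $\pi_b$'s comprising it form part of the $\kay$-vector space basis of $\Hom_R(R^{1/q},R)$ from \autoref{Prop:PayneProp}.

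Next, I would evaluate at $\mathbf x^{a/p^e}$. By the definition of $\pi_b$, the term $c_b\pi_b(\mathbf x^{a/p^e})$ equals $c_b\,\mathbf x^{b+a/p^e}$ whenever $b+a/p^e \in \bZ^d$, and vanishes otherwise. For distinct $b$ in the sum (with $b+a/p^e\in\bZ^d$), the exponents $b+a/p^e$ are distinct, and the corresponding Laurent monomials are $\kay$-linearly independent inside $\kay[x_1^{\pm},\dots,x_d^{\pm}]$, hence also inside $R$. Since $\vp(\mathbf x^{a/p^e}) = 1 = \mathbf x^0$, matching coefficients forces $c_b = 0$ for every $b$ other than $-a/p^e$ and $c_{-a/p^e} = 1$; in particular, $-a/p^e$ must appear in the sum. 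But every $b$ appearing in the expansion satisfies the combinatorial condition of \autoref{thm:toricDn}, so $\pi_{-a/p^e} \in \sD^{(n)}_e(R)$, as desired.

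There is essentially no obstacle here beyond assembling \autoref{Prop:PayneProp} and \autoref{thm:toricDn}: the only substantive point is observing that the monomial condition $b+a/p^e\in\bZ^d$ pins down a single lattice translate of $-a/p^e$, and that within this translate the monomials $\mathbf x^{b+a/p^e}$ are pairwise distinct and hence linearly independent. Implicit in the argument is that $-a/p^e$ necessarily lies in $P_{-K_X}^\circ$ (otherwise $\pi_{-a/p^e}$ would not be a basis element and $c_{-a/p^e}$ would be forced to vanish, contradicting $\vp(\mathbf x^{a/p^e})=1$); this is automatic from the fact that $-a/p^e$ appears with a nonzero coefficient in the $\kay$-basis expansion.
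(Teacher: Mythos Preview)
Your proof is correct and follows essentially the same approach as the paper: write $\vp$ in the $\kay$-basis of $\pi_b$'s provided by \autoref{thm:toricDn}, evaluate at $\mathbf x^{a/p^e}$, and use linear independence of the resulting monomials to conclude that $-a/p^e$ must occur among the indices. One small overstatement: coefficient matching only forces $c_b=0$ for those $b\neq -a/p^e$ lying in the coset $-a/p^e+\bZ^d$ (the other terms vanish automatically), but this does not affect the conclusion that $c_{-a/p^e}=1$ and hence $\pi_{-a/p^e}\in\sD^{(n)}_e$.
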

\begin{proof}
By \autoref{thm:toricDn}, we may write $\vp = \sum_i c_i \pi_{a_i}$ where $a_i \in \frac{1}{p^e}\bZ$ and $\pi_{a_i} \in \sD^{(n)}$ for all $i$. Then 
\[
  \vp\left( \mathbf x^{a/p^e} \right) = \sum_{a_i \in a/p^e + \bZ^d} c_i \mathbf x^{a_i + a/p^e} = 1.
\]
It follows that $a_i + a/p^e = 0$ for some $i$, so $\pi_{-a/p^e} \in \sD^{(n)}_e$.
\end{proof}

The following consequences of \autoref{thm:toricDn} are of independent interest. 

\begin{corollary}
Work in \autoref{setting:toric}. Then we have
\[
 \sD^{(n)}(R) \supseteq \sD^{(n+1)}(R)
\]
for all $n > 0$. In particular, if a toric ring is $n$-diagonally $F$-regular, then it is $m$-diagonally $F$-regular for all $m \leq n$. 
\end{corollary}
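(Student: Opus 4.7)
The plan is to establish the containment $\sD^{(n+1)}(R) \subseteq \sD^{(n)}(R)$ directly from the combinatorial characterization in \autoref{thm:toricDn}, and then to deduce the second statement by iterating. Since $\sD^{(n)}_e(R)$ is spanned over $\kay$ by monomial maps $\pi_a$, it suffices to show that any $\pi_a \in \sD^{(n+1)}_e(R)$ lies in $\sD^{(n)}_e(R)$.

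To verify this, I would fix $u_1, \ldots, u_n \in \frac{1}{q}\bZ^d$ and apply the $(n+1)$-condition provided by \autoref{thm:toricDn} to the tuple $(u_1, \ldots, u_n, 0)$. This produces vectors $v_1, \ldots, v_{n+1}$ with $v_i \in u_i + \bZ^d$, each $v_i \in P_{-K_X}^\circ$, and $\sum_{i=1}^{n+1} v_i \in a - P_{-K_X}^\circ$. The crucial observation is that $v_{n+1}$ is a lattice point inside $P_{-K_X}^\circ$: because $\langle v_{n+1}, v_\rho \rangle$ is an integer strictly greater than $-1$, it must be nonnegative, so $v_{n+1}$ lies in the dual cone $\Sigma^\vee$.

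This lets us absorb $v_{n+1}$ into $v_n$ without leaving $P_{-K_X}^\circ$. Indeed, if $\langle v_n, v_\rho \rangle > -1$ and $\langle v_{n+1}, v_\rho \rangle \geq 0$, then $\langle v_n + v_{n+1}, v_\rho\rangle > -1$. Setting $v_n' := v_n + v_{n+1}$ and $v_i' := v_i$ for $i < n$, the tuple $v_1', \ldots, v_n'$ satisfies exactly the conditions required by \autoref{thm:toricDn} to conclude $\pi_a \in \sD^{(n)}_e(R)$. Iterating the resulting inclusion then yields $\sD^{(n)}(R) \subseteq \sD^{(m)}(R)$ for all $m \leq n$, so $n$-diagonal $F$-regularity implies $m$-diagonal $F$-regularity.

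The essential point is the observation that integer points of $P_{-K_X}^\circ$ lie in $\Sigma^\vee$, which is what makes the ``absorption'' possible; after this, the argument is bookkeeping and I anticipate no real obstacle. One could alternatively give a purely formal proof, valid beyond the toric setting, by composing a lift $\widehat \vp \: (R^{\otimes (n+1)})^{1/q} \to R^{\otimes (n+1)}$ with the $R$-linear maps $\iota(a_1 \otimes \cdots \otimes a_n) := a_1 \otimes \cdots \otimes a_n \otimes 1$ and $\mu(a_1 \otimes \cdots \otimes a_{n+1}) := a_1 \otimes \cdots \otimes a_{n-1} \otimes (a_n a_{n+1})$, using the identities $\Delta_{n+1} \circ \iota = \Delta_n$ and $\Delta_n \circ \mu = \Delta_{n+1}$. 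However, the combinatorial argument above fits more naturally with the toric framework developed in the corollary's ambient setting.
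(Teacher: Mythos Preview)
Your proof is correct and follows essentially the same approach as the paper: both set $u_{n+1}=0$, invoke \autoref{thm:toricDn}, and use the key observation that $v_{n+1}\in\bZ^d\cap P_{-K_X}^\circ$ forces $\langle v_{n+1},v_\rho\rangle\geq 0$. The only cosmetic difference is that the paper absorbs $v_{n+1}$ into the right-hand side via $P_{-K_X}^\circ+v_{n+1}\subseteq P_{-K_X}^\circ$ (keeping $v_1,\ldots,v_n$ unchanged), whereas you absorb it into $v_n$; both are the same translation-invariance fact.
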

\begin{proof}
  Fix some $e>0$, and set $q = p^e$. Let $\pi_a\in \sD^{(n+1)}_{e}(R)$ and let $u_1, \ldots, u_n \in \frac{1}{q}\bZ^d$ be arbitrary.  Setting $u_{n+1} = 0$,  \autoref{thm:toricDn} tells us that there exist $v_i \in u_i + \bZ^d$, for $1 \leq i \leq n+1$, such that $v_i \in P_{-K_X}^\circ$ and $\sum_{i=1}^{n+1} v_i \in a - P^\circ_{-K_X}$. By definition, this means $\braket{v_{n+1}, v_\rho} > -1$ for all $\rho$. As $v_{n+1}, v_\rho \in \bZ^d$, this means  $\braket{v_{n+1}, v_\rho} \geq 0$.  It follows that $P^\circ_{-K_X}  + v_{n+1} \subseteq P^{\circ}_{-K_X}$. Thus we have
  \begin{equation*}
    \sum_{i=1}^n v_i \in a - P_{-K_X}^\circ - v_{n+1}  = a - (P_{-K_X}^\circ + v_{n+1}) \subseteq a - P_{-K_X}^\circ.
  \end{equation*}
  By \autoref{thm:toricDn}, it follows that $\pi_a \in \sD^{(n)}_{e}(R)$, as desired. 
\end{proof}

\begin{theorem}
Let $P$ be a finite poset and $R \coloneqq \kay [\mathcal I(P)]$ the associated Hibi ring over the field $\kay$. Then $R$ is $2$-diagonally $F$-regular over $\kay$.
\label{thm:hibi2dfr}
\end{theorem}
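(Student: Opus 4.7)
The plan is to combine the reduction provided by the proposition immediately following the definition of $n$-diagonal $F$-regularity with the combinatorial characterization \autoref{thm:toricDn} and its consequence \autoref{thm:monomialSplittings}. By that proposition, it suffices to exhibit a single $y \in R$ with $R_y$ smooth and some $\vp \in \sD^{(2)}_e(R)$ satisfying $\vp(y^{1/q}) = 1$; by \autoref{thm:monomialSplittings} we may further take $\vp = \pi_{-a/p^e}$, where $y = \mathbf{x}^a$. I will choose $a$ to be a lattice point in the interior of $\sigma^\vee$---for instance $a(v) := 1 + \ell(v)$ on the nodes of $\bar P$, where $\ell(v)$ is the length of the longest chain in $\bar P$ starting at $v$ and ending at a maximal element of $P$. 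Such a choice guarantees that $R_{\mathbf{x}^a}$ is the Laurent polynomial ring (hence smooth), and that for $p^e$ large enough, $-a/p^e \in P_{-K_X}^\circ$, so $\pi_{-a/p^e}$ is a well-defined element of $\Hom_R(R^{1/q}, R)$.

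By \autoref{thm:toricDn}, it then remains to verify the combinatorial criterion for $\pi_{-a/p^e} \in \sD^{(2)}_e(R)$: for every pair $u_1, u_2 \in \frac{1}{q}\bZ^d$, there must exist $v_i \in u_i + \bZ^d$ with $v_i \in P_{-K_X}^\circ$ and $v_1 + v_2 \in -a/p^e - P_{-K_X}^\circ$. The key structural feature of a Hibi ring is that each primitive ray generator $v_\rho$ of the associated fan has the form $e_i^\ast - e_j^\ast$ (corresponding to a covering relation $v_i \lessdot v_j$ in $\bar P$) or $e_i^\ast$ (corresponding to a maximal element $v_i$ of $P$). Consequently the defining inequalities of $P_{-K_X}^\circ$ unpack into a system of strict inequalities, each local to a single pair of coordinates indexed by $\bar P$, and indexed by the covering relations of $\bar P$ together with the maximal elements of $P$.

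The main obstacle---and the heart of the argument---is the explicit construction of $v_1, v_2$ for any given pair $(u_1, u_2)$. My plan is to first fix canonical representatives $\tilde v_i \in u_i + \bZ^d$ with coordinates in a fixed fundamental domain, then adjust coordinate-by-coordinate by integer shifts, sweeping through the elements of $\bar P$ along a linear extension of the poset order---for instance, top-down, starting from the maximal elements of $P$ and proceeding down to $v_0$. Each shift affects only the inequalities involving the corresponding coordinate, and the acyclicity of $\bar P$ lets the corrections propagate consistently. The slack in the inequalities defining $-a/p^e - P_{-K_X}^\circ$ provided by the choice of $a$ is what allows the integer corrections to fit, once $q$ is taken large enough; crucially, for the $n = 2$ case, the total "defect" $v_1 + v_2$ to be corrected at each coordinate is bounded by $2$, so a single compensating integer shift per coordinate suffices. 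The essential feature distinguishing Hibi rings from arbitrary toric rings is precisely that the rays come from covering relations in a poset, so the constraint system respects a natural hierarchical order, making this sweep well-defined.
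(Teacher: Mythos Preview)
Your reduction to the combinatorial criterion is fine and matches the paper's: both arrive at showing that $\pi_{-a/q}$ lies in $\sD^{(2)}_e$ for a suitable interior monomial $\mathbf x^a$ and $q \gg 0$. The gap is in the proposed sweep.

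The coordinate-by-coordinate adjustment along a linear extension runs into trouble at any vertex $v_i$ that is covered by two incomparable vertices $v_{j_1}, v_{j_2}$---a configuration present in every Hibi poset outside the scope of \autoref{thm:comparabletopnodes}, for instance the one in \autoref{fig:4nodes}. Processing top-down, at $v_i$ you must satisfy the lower bounds $(v_\ell)_i > (v_\ell)_{j_s} - 1$ for each $s$ and each $\ell \in \{1,2\}$, together with the upper bounds $(v_1 + v_2)_i < (v_1+v_2)_{j_s} + 1 - O(1/q)$. If your earlier choices placed $(v_1)_{j_1}$ and $(v_1)_{j_2}$ (or $(v_2)_{j_1}$ and $(v_2)_{j_2}$) in different unit intervals, the maximum of the lower bounds can exceed the minimum of the upper bounds, and no integer shift at $v_i$ repairs this. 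Acyclicity of $\bar P$ only guarantees a linear extension exists; it says nothing about compatibility of constraints coming from incomparable covers. Your remark that the defect is ``bounded by $2$'' does not resolve this, because the two defects are measured against different anchors $j_1, j_2$. Nor does the slack from $a/q$ help: that slack is $O(1/q)$ and shrinks as $q$ grows.

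The paper sidesteps this with a global argument that ignores the poset structure altogether. Via \cite[Theorem~6.4]{Smolkin} it suffices to show, for some $\varepsilon > 0$, that the set
\[
S = \bigl\{ x \in \bR^{|\bar P|} : -1 < x_i - x_j < 1 - \varepsilon \text{ for } v_i \lessdot v_j,\ -1 < x_i < 1 - \varepsilon \text{ for } v_i \text{ maximal} \bigr\}
\]
meets every $\bZ^{|\bar P|}$-coset. The key idea is pigeonhole on the circle: the images of $x_0, \ldots, x_n$ in $\bR/\bZ$ cut it into at most $n+1$ arcs, one of length $\geq 1/(n+1)$; cutting at the midpoint of that arc and lifting to a half-open interval of length $1$ places \emph{all} coordinates simultaneously within a sub-interval of length $\leq 1 - 1/(2(n+1))$, forcing every inequality at once. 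This global placement is exactly what your sweep lacks.
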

\begin{proof}
Let $n = | P|$ and write $P = \{v_1, \ldots, v_n\}$. Set $v_0 = -\infty$. Then $R$ is a normal affine $(n+1)$-dimensional toric variety. The rays of the cone corresponding to $R$ are given by the vectors, $e_i - e_j$ for all $0 \leq i, j \leq n$ such that $v_i \lessdot v_j$, along with $e_i$ for all $i$ such that $v_i$ is maximal in $P$. It follows that $P^\circ_{-K_R}$ is given by the linear inequalities:
\[
P^\circ_{-K_R} = 
\left\{ (x_0, \ldots, x_n) \Big | \begin{array}{cc}
     x_i - x_j >  -1, & \textrm{whenever } v_i \lessdot v_j \\
     x_i > -1, & \textrm{whenever } v_i \textrm{ maximal}
\end{array} \right \}
\]

It suffices to show that, for all $a \in \bZ^d$, there exists $e \gg 0$ such that $\pi_{a/p^e} \in \sD^{(2)}$. By \cite[Theorem 6.4]{Smolkin}, it suffices to find $\varepsilon > 0$ such that 
\[
  P^\circ_{-K_R} \cap \left( \delta  - P^\circ_{-K_R} \right)
\]
covers all of $\bR^{n+1}$ under integer translations whenever $\delta \in \bR^{n+1}$ is a vector with $| \delta | < \varepsilon$. In turn, it suffices to find some $\varepsilon > 0$ such that the set
\[
S \coloneqq \left\{ (x_0, \ldots, x_n) \Big | \begin{array}{cc}
     -1 < x_i - x_j < 1 - \varepsilon, & \textrm{whenever } v_i \lessdot v_j \\
     -1 < x_i < 1 - \varepsilon, & \textrm{whenever } v_i \textrm{ maximal}
\end{array} \right \}
\]
covers $\bR^{n+1}$ under integer translations. We will show that $\varepsilon = 1/3(n+1)$ works.  

To see this, let $(x_0, \ldots, x_n) \in \bR^{n+1}$ be arbitrary and let $\vp: \bR \to \bR/\bZ$ be the canonical surjection. Then, thinking of $\bR/\bZ$ as a circle of unit circumference, $\vp$ is a local isometry. The points $\vp(x_i)$ partition $\bR/\bZ$ into $n+1$ arcs. By the pigeon-hole principle, one of these arcs must have length greater than or equal to $1/(n+1)$. Fix a point $\xi$ in the mid-point of such an arc and choose a point 
\[
 \zeta \in \left( -1-\frac{1}{2(n+1)},-\frac{1}{2(n+1)} \right]
 \]
 with $\vp(\zeta) = \xi$. Let $\psi: \bR/\bZ \to [\zeta, \zeta + 1)$ be the map such that $\vp \circ \psi$ is the identity. It follows that $\psi \circ \vp(x_i) \equiv x_i$ mod $\bZ$ for each $i$, that 
 \[
 \left| \psi\circ \vp(x_i) - \psi \circ \vp(x_j) \right| \leq 1 - \frac{1}{2(n + 1)}
 \]
 for all $i,j$, and that $-1 < \psi \circ \vp(x_i) \leq 1 - 1/2(n+1)$ for all $i$. Thus, $(\psi\circ \vp(x_0), \ldots, \psi \circ \vp(x_n))$ is an integer translation of $(x_0, \ldots, x_n)$ in $S$.
\end{proof}

The proof of \cite[Theorem 4.1]{SmolkinCarvajal} actually shows the following: if $R$ and $m$-diagonally $F$-regular, then $\mathfrak p^{(mh)} \subseteq \mathfrak p^m$ for all prime ideals $\mathfrak p \subseteq R$, where $h = \operatorname{height}(\mathfrak p)$. Thus we get:

\begin{corollary}
Let $R$ be a Hibi ring over a field. Then $\mathfrak p^{(2h)} \subseteq \mathfrak p^2$ for all  $\mathfrak p \in \Spec R$ with $h = \operatorname{height}(\mathfrak p)$.
\end{corollary}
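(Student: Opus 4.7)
The plan is to combine Theorem B (\autoref{thm:hibi2dfr}) with the refined form of \cite[Theorem 4.1]{SmolkinCarvajal} recorded just above, then bootstrap from positive characteristic to arbitrary fields via reduction mod $p$.

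Over a field of positive characteristic the argument is immediate: \autoref{thm:hibi2dfr} gives that the Hibi ring $R = \kay[\mathcal I(P)]$ is $2$-diagonally $F$-regular, and the strengthened form of \cite[Theorem 4.1]{SmolkinCarvajal} stated above --- namely, that $m$-diagonal $F$-regularity implies $\mathfrak p^{(mh)} \subseteq \mathfrak p^m$ for every prime $\mathfrak p$ of height $h$ --- instantly yields the claim with $m=2$.

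For a field $L$ of characteristic zero, the plan is to use the standard spreading-out framework, \cf{} \cite{HochsterHunekeTightClosureInEqualCharactersticZero}. The Hibi ring is defined purely from the combinatorial data of $P$ and admits the canonical integral model $\bZ[\mathcal I(P)]$, so that $L[\mathcal I(P)] = L \otimes_{\bZ} \bZ[\mathcal I(P)]$. Given a prime $\mathfrak p \subseteq L[\mathcal I(P)]$ of height $h$ and an element $f \in \mathfrak p^{(2h)}$ to be shown to lie in $\mathfrak p^2$, I would descend $\mathfrak p$ and $f$ to an ideal $\mathfrak p_A$ and element $f_A$ over a finitely generated $\bZ$-subalgebra $A \subseteq L$, arranged (after shrinking $\Spec A$) so that $\mathfrak p_A$ remains a prime of height $h$ in $A[\mathcal I(P)]$ and $f_A \in \mathfrak p_A^{(2h)}$. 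Specializing at any closed point $\mu \in \Spec A$ of positive residue characteristic, the fiber is a Hibi ring over a field of characteristic $p > 0$; the positive characteristic case then forces the image of $f_A$ to lie in the square of the image of $\mathfrak p_A$. Applying this on a dense open locus and invoking faithful flatness of $L \otimes_A (-)$ along the generic point then gives $f \in \mathfrak p^2$, completing the proof.

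The main obstacle is entirely bookkeeping in the second step: ensuring that height, primeness, and symbolic-power membership all survive the chosen generic specialization. All three are standard consequences of generic freeness and the behavior of symbolic powers under flat base change, and none introduces any genuinely new difficulty beyond Theorem B itself.
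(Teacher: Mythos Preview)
Your proposal is correct and follows essentially the same approach as the paper: combine \autoref{thm:hibi2dfr} with the refined version of \cite[Theorem 4.1]{SmolkinCarvajal} in positive characteristic, then reduce the characteristic zero case to positive characteristic via the standard argument of \cite[Section 6]{SmolkinCarvajal} and \cite{HochsterHunekeTightClosureInEqualCharactersticZero}. The paper simply cites these references for the reduction step, whereas you have spelled out the spreading-out bookkeeping in more detail.
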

\noindent
The characteristic 0 case of the above corollary follows from the positive characteristic case by a standard argument; see \cite[Section 6]{SmolkinCarvajal}, \cite{HochsterHunekeTightClosureInEqualCharactersticZero}.

\section{The main combinatorial problem}

Let $R = \kay[\mathcal{I}(P)]$ be a Hibi ring associated to a poset $P = \{v_1,\dots,v_d\}$, and let $S = \kay[x_0, \dots , x_d]$ so that there is a natural inclusion $R \subset S$.  We can reduce checking that $R$ is diagonally $F$ regular to a combinatorial property.
We begin by fixing the element $z = \prod_{I \in \mathcal{I}(P)} x_I = \prod_{i=0}^d x_i^{r_i}$ of $R$.  The choice of this element is not particularly important, only that $r_i > r_j$ whenever $v_i \lessdot v_j$.  From this, it follows that $z$ vanishes along every torus invariant divisor of $\Spec(R)$, so that $R[1/z]$ is an algebraic torus over $k$ and hence a regular ring.  Following the strategy of \cite[Section 5.1]{SmolkinCarvajal}, it suffices to find $e > 0$ and $\phi \in \sD^{(n)}_e(R)$ for all $n > 0$ with $\phi(z^{1/q}) = 1$.  To that end, let us fix hereafter some $e > 0$ sufficiently large that $q = p^e > \max_{i}r_i$ is strictly larger than any exponent appearing in $z$.  We will show that such a map is equivalent to a choice of $\delta_{i,m}$ satisfying a particular property for every choice of integers $\alpha_{i,m}$ in $[0, q-1]$ such that $\sum_{m=1}^n \alpha_{j, m} \equiv r_j$ (mod $q$) for all $j$.

\begin{proposition}\label{Prop:combinatorialproperty}
  Let $R, S$ and $z$ be as above. Then there exists $\phi \in \sD_e^{(n)}(R)$ with $\phi(z^{1/q}) = 1$ if and only if the following statement holds: for $0 \leq i \leq d$ and $1 \leq m \leq n$, let $\alpha_{i, m}$ be integers in $[0, q-1]$ such that $\sum_{m=1}^n \alpha_{j, m} \equiv r_j$ (mod $q$) for all $j$. Set $N_j = \lfloor \sum_{m=1}^n \frac{\alpha_{j,m}}{q} \rfloor$. For all $i, j,$ and $m$, let $\varepsilon_{j, i, m} = 1$ if $\alpha_{j,m} > \alpha_{i, m}$ and let $\varepsilon_{j, i, m} = 0$ otherwise. Then there exist $\delta_{i, m} \in \bZ$ with 
  \begin{enumerate}
    \item $\delta_{i,m} \geq 0$ for all $m$ whenever $v_i$ is maximal in $P$,
    \item $\delta_{j,m} \leq \varepsilon_{j, i, m} + \delta_{i,m}$  for all  $m$ whenever  $v_i \lessdot v_j$,  and 
    \item $\sum_{m=1}^n \delta_{j,m} = N_j $
  \end{enumerate}
\end{proposition}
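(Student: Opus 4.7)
The plan is to combine Corollary \ref{thm:monomialSplittings} with Theorem \ref{thm:toricDn} for the forward direction, and to construct an explicit $R^{\otimes n}$-linear lift $\hat\varphi$ of $\pi_{-r/q}$ for the backward direction.

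By Corollary \ref{thm:monomialSplittings}, the existence of $\phi \in \sD^{(n)}_e(R)$ with $\phi(z^{1/q}) = 1$ is equivalent to $\pi_{-r/q} \in \sD^{(n)}_e(R)$, where $r = (r_0, \dots, r_d)$ are the exponents of $z$. For the forward direction, Theorem \ref{thm:toricDn} yields, for each $\alpha$ with $\alpha_m \in [0, q-1]^{d+1}$ satisfying the mod condition, integer vectors $\delta_m \in \bZ^{d+1}$ such that the points $v_m = -\alpha_m/q + \delta_m$ lie in $P^\circ_{-K_X}$ and $\sum_m v_m \in -r/q - P^\circ_{-K_X}$ (choosing $u_m = -\alpha_m/q$). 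A routine case analysis---based on whether $\alpha_{j,m} > \alpha_{i,m}$ for each covering relation $v_i \lessdot v_j$, together with $\alpha_{i,m}/q \in [0, 1)$ for the maximality inequalities---translates $v_m \in P^\circ_{-K_X}$ into conditions (1) and (2). For the last condition, set $w \coloneqq -r/q - \sum_m v_m$; the mod condition forces $w \in \bZ^{d+1} \cap P^\circ_{-K_X}$, which coincides with the monomial semigroup $\sigma^\vee \cap \bZ^{d+1}$ of $R$. Since $P^\circ_{-K_X} + \sigma^\vee \subseteq P^\circ_{-K_X}$, we can absorb $w$ into a single $v_m$ (replacing $v_1$ by $v_1 + w$, say), normalizing $w$ to $0$ and obtaining $\sum_m \delta_{j,m} = N_j = \lfloor \sum_m \alpha_{j,m}/q \rfloor$, which is condition (3).

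For the backward direction, I construct the lift $\hat\varphi \coloneqq \sum_\alpha \pi_{b_\alpha}$ summed over all $\alpha \in ([0, q-1]^{d+1})^n$ satisfying the mod condition, where $b_{\alpha, m} \coloneqq -\alpha_m/q + \delta_{\alpha, m}$ is built from the hypothesized $\delta$'s. Each $\pi_{b_\alpha}$ is a valid $R^{\otimes n}$-linear map by Proposition \ref{Prop:PayneProp} applied to $R^{\otimes n}$ (whose anticanonical polytope is $(P^\circ_{-K_X})^n$), since $b_{\alpha, m} \in P^\circ_{-K_X}$ by (1) and (2). Compatibility $\Delta_n \circ \hat\varphi = \pi_{-r/q} \circ \Delta_n^{1/q}$ is checked on each input monomial $\mathbf x^{a/q}$: when $a \bmod q$ satisfies the mod condition, exactly one summand $\pi_{b_\alpha}$ contributes, and the identity $\sum_m \delta_{\alpha, m} = N_j$ from (3) is exactly what is needed for $\Delta_n(b_\alpha) = -r/q$, matching the right side; when $a \bmod q$ fails the mod condition, both sides vanish---the right because $\pi_{-r/q}$ annihilates such inputs, the left because no $b_\alpha$ in the sum has $b_\alpha + a/q \in \bZ^{(d+1)n}$ (as $\alpha$ would have to equal $a \bmod q$, contradicting that the former is case A and the latter case B). Thus $\hat\varphi$ is a genuine lift in the sense of Definition \ref{def.diagCartAlg}, giving $\pi_{-r/q} \in \sD^{(n)}_e(R)$ without any need to verify the criterion of Theorem \ref{thm:toricDn} on non-mod-condition $u_m$'s directly.

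The main obstacle is making the absorption step rigorous in the forward direction: one must verify that $P^\circ_{-K_X} + \sigma^\vee \subseteq P^\circ_{-K_X}$ (immediate from the strict inequalities defining $P^\circ_{-K_X}$ and the closed cone inequalities defining $\sigma^\vee$), that the absorption preserves $v_m \in u_m + \bZ^{d+1}$ (since $w \in \bZ^{d+1}$), that (1) and (2) continue to hold after the shift (inherited from $v_1 + w \in P^\circ_{-K_X}$), and that the shift in $\sum_m \delta_m$ by $w$ gives exactly $N_j$. All of these steps are short individually, but they must be aligned carefully so that the final $\delta$'s produced satisfy all three conditions simultaneously for the same $\alpha$.
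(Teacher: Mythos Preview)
Your proof is correct and takes a genuinely different route from the paper's. The paper works by hand in the ambient polynomial ring $S = \kay[x_0, \dots, x_d]$: for the direction ``combinatorial $\Rightarrow$ existence'' it defines a $p^{-e}$-linear map $\widetilde\phi_n$ on $S^{\otimes n}$ using the $\delta$'s as exponents, then verifies by direct inequality manipulation that $\widetilde\phi_n$ restricts to $R^{\otimes n}$ and makes the diagram commute; for ``existence $\Rightarrow$ combinatorial'' it reads the $\delta$'s off as image exponents and proves condition~(2) via an auxiliary Lemma~\ref{equalityassumption}, which manufactures a test monomial in $(R^{\otimes n})^{1/q}$ achieving equality in the relevant floor inequality. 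You instead lean on the toric machinery already established in the paper. In one direction, Theorem~\ref{thm:toricDn} hands you the $v_m \in P^\circ_{-K_X}$ directly, from which (1) and (2) drop out by the obvious integrality case analysis, and your absorption trick---shifting $v_1$ by the integer point $w \in \sigma^\vee \cap \bZ^{d+1} = P^\circ_{-K_X} \cap \bZ^{d+1}$---cleanly forces (3) without needing the paper's test-monomial lemma. In the other direction, building $\hat\varphi$ as a sum of maps $\pi_{b_\alpha}$ on $R^{\otimes n}$ via Proposition~\ref{Prop:PayneProp} and then verifying Definition~\ref{def.diagCartAlg} directly is slicker than the paper's ambient-ring construction; your observation that this bypasses the residue classes not satisfying the mod condition (so that Theorem~\ref{thm:toricDn} never needs to be invoked on those $u_m$'s) is exactly what makes the equivalence close up using only the $\alpha$'s appearing in the statement. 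The paper's approach is more self-contained, avoiding any appeal to the appendix result; yours is shorter and makes transparent that Proposition~\ref{Prop:combinatorialproperty} is essentially Theorem~\ref{thm:toricDn} specialized to the Hibi cone.
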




\begin{proof}

First suppose that for every choice of 
$\alpha_{i,m}$ as above, we have such a choice of $\delta_{i,m}$.  
In order to construct the requisite map $\phi$, we will exhibit a $p^{-e}$-linear map $\widetilde{\phi}$ on $S$ and check that $\widetilde\phi(R^{1/q})\subseteq R$.  The advantage here is that defining $p^{-e}$-linear maps on $S$ is straightforward as $S^{1/q}$ is a free $S$-module, and we can then set $\phi$ to be the restriction of $\widetilde\phi$ to $R$.  Similarly, in order to show that $\phi \in \sD^{(n)}_e(R)$, we need to check that $\phi$ admits a lifting to a $p^{-e}$-linear map $\phi_n$ on $R^{\otimes n}$ compatible with the diagonal for each $n > 0$.  To do this, we will again construct a $p^{-e}$-linear map $\widetilde\phi_n$ on $S^{\otimes n}$ and verify $\widetilde\phi_n((R^{\otimes n})^{1/q})\subseteq R^{\otimes n}$. The map $\phi_n$ will be the restriction of $\widetilde\phi_n$ to $R^{\otimes n}$, and verifying the requisite compatibility with the diagonal completes the proof.

{
\renewcommand{\a}{\alpha}
\subsection*{Construction of \mbox{$\phi$}}
    We have that $S^{1/q}$ is a free $S$-module with free basis given by the monomials $x_0^{\a_0/q} \cdots x_d^{\a_d/q}$ with $\a_i \in \mathbb{Z}$ and $0 \leq \a_i < q$ for each $i = 0, \ldots, d$. In particular, $z^{1/q}$ is an element of the monomial free basis, and we set $\widetilde\phi$ to be the projection onto that factor.  Thus, for an arbitrary monomial $x_0^{a_0/q} \cdots x_d^{a_d/q}$ with $a_i \in \mathbb{Z}_{\geq 0}$ of $S^{1/q}$, we have
    \[
    \widetilde{\phi}(x_0^{a_0/q} \cdots x_d^{a_d/q}) = \left\{
    \begin{array}{c@{\qquad}l}
    x_0^{(a_0-r_0)/q} \cdots x_d^{(a_d - r_d)/q} &  a_i \equiv r_i \mod q \mbox{ for all } i=0,\ldots,d\\
    0 & \mbox{otherwise}
    \end{array}\right..
    \]
    We need to check that $\widetilde{\phi}(R^{1/q})\subseteq R$.  We will do this monomial by monomial.  An arbitrary monomial of $R^{1/q}$ has the form $x_0^{a_0/q} \cdots x_d^{a_d/q}$ with $a_i \in \mathbb{Z}_{\geq 0}$ such that $a_i \geq a_j$ for all $v_i \lessdot v_j$.  If $a_i \not \equiv r_i \mod q$ for some $i= 0, \ldots, d$, then clearly $\widetilde\phi(x_0^{a_0/q} \cdots x_d^{a_d/q}) = 0 \in R$ and we are done. Else, suppose that $a_i \equiv r_i \mod q$ for all $i=0,\ldots,d$.  In this case, note in particular that $(a_i - r_i)/q = \lfloor a_i/q \rfloor$ for all $i$.  If $v_i \lessdot v_j$, then $a_i \geq a_j$ implies $a_i/q \geq a_j/q$ and hence also $\lfloor a_i/q \rfloor \geq \lfloor a_j/q \rfloor$.  Thus, it follows that
    \[
    \widetilde{\phi}(x_0^{a_0/q} \cdots x_d^{a_d/q}) = x_0^{\lfloor a_0/q \rfloor} \cdots x_d^{\lfloor a_d/q \rfloor} \in R
    \]
    as desired.  Setting $\phi = \widetilde{\phi}|_{R^{1/q}}$, we observe that $\phi(z^{1/q}) = 1$ by construction.

\subsection*{Construction of $\widetilde\phi_n$}

Working towards the construction of $\phi_n$, we first define a $p^{-e}$-linear map $\widetilde{\phi}_n$ on $S^{\otimes n}$.
Now, $(S^{\otimes n})^{1/q}$ is a free $S$-module with free basis given by the monomials $\prod_{m=1}^n \prod_{i=0}^d x_{i,m}^{\a_{i,m}/q}$ with $\a_{i,m} \in \mathbb{Z}$ and $0 \leq \a_{i,m} < q$ for each $i = 0, \ldots, d$ and $m = 1, \ldots, n$. We specify the image of such monomials under $\widetilde{\phi}_n$ in two cases. 
\begin{itemize}
    \item Case 1: $\sum_{m=1}^n \a_{i,m} \not \equiv r_i \mod q$ for some $1 \leq i \leq n$. 
    \item Case 2: $\sum_{m=1}^n \a_{i,m} \equiv r_i \mod q$ for all $1 \leq i \leq n$.  
\end{itemize}

In Case 1, simply set $\widetilde{\phi}_n(\prod_{m=1}^n \prod_{i=0}^d x_{i,m}^{\a_{i,m}/q}) = 0$.  Hereafter, we will assume we are working to specify the image under $\widetilde{\phi_n}$ of a monomial basis element in Case 2, and as before, set 
    \[
    N_i = \frac{(\sum_{m=1}^n \a_{i,m}) - r_i }{q} = \left\lfloor \frac{\sum_{m=1}^n \a_{i,m}}{q} \right\rfloor \geq 0
    \]
for each $i = 0, \ldots, d$.  Note that, while not immediate from the notation, $N_i$ depends heavily on the particular basis element in question. 

We have assumed we have a choice of $\delta_{i,m}$ for $0 \leq i \leq d$ and $1 \leq m \leq n$ satisfying:
\begin{enumerate} 
    \item $\delta_{i,m} \geq 0$ for all $m$ whenever $v_i$ is maximal in $P$,\label{eq:deltaconditionsa}
    \item $\delta_{j,m} \leq \varepsilon_{j, i, m} + \delta_{i,m}$  for all  $m$ whenever  $v_i \lessdot v_j$,  and \label{eq:deltaconditionsb} 
    \item $\sum_{m=1}^n \delta_{j,m} = N_j $ \label{eq:deltaconditionsc}
  \end{enumerate}

Then we set
    \[
    \widetilde{\phi}_n\left(\prod_{m=1}^n \prod_{i=0}^d x_{i,m}^{\a_{i,m}/q}\right) = \prod_{m=1}^n \prod_{i=0}^d x_{i,m}^{\delta_{i,m}}
    \]
for $i=0, \ldots, d$ and $m = 1, \ldots, n$, noting that \eqref{eq:deltaconditionsa} and \eqref{eq:deltaconditionsb} ensure our image is in $S^{\otimes n}$.

This completes the construction of $\widetilde{\phi}_n$, but before we proceed with the proof let us make explicit the image of an arbitary monomial of $(S^{\otimes n})^{1/q}$.  Consider $\prod_{m=1}^n \prod_{i=0}^d x_{i,m}^{a_{i,m}/q}$ with $a_{i,m} \in \mathbb{Z}_{\geq 0}$ for each $i = 0, \ldots, d$ and $m = 1, \ldots, n$. We denote by $\{ \mu \} = \mu - \lfloor \mu \rfloor $ the fractional part of a real number $\mu$.  Setting $\a_{i,m} = q \{ \frac{a_{i,m}}{q}\}$ for each $i,m$ expresses our arbitrary monomial
    \[
    \prod_{m=1}^n \prod_{i=0}^d x_{i,m}^{a_{i,m}/q} = \left( \prod_{m=1}^n \prod_{i=0}^d x_{i,m}^{\lfloor a_{i,m}/q \rfloor}\right) \cdot \left(\prod_{m=1}^n \prod_{i=0}^d x_{i,m}^{\a_{i,m}/q} \right) 
    \]
    so we have
    \begin{equation} \label{eq:phitildeformula}
    \widetilde{\phi}\left(\prod_{m=1}^n \prod_{i=0}^d x_{i,m}^{a_{i,m}/q} \right) = \prod_{m=1}^n \prod_{i=0}^d x_{i,m}^{\lfloor a_{i,m}/q \rfloor + \delta_{i,m}},
    \end{equation}
    }
where the $\delta_{i,m}$ are determined by the $\alpha_{i,m}$ as above. 

\subsection*{Construction of $\phi_n$} Having defined $\widetilde{\phi}_n$, we next show that it restricts to a $p^{-e}$-linear map on $R^{\otimes n}$.  In other words, we need to check that $\widetilde{\phi}_n((R^{\otimes n})^{1/q}) \subseteq R^{\otimes n}$.  As before, we proceed to verify this containment monomial by monomial.  A monomial of $(R^{\otimes n})^{1/q}$ has the form $\prod_{m=1}^n \prod_{i=0}^d x_{i,m}^{a_{i,m}/q}$ with $a_{i,m} \in \mathbb{Z}_{\geq 0}$ for each $i = 0, \ldots, d$ and $m = 1, \ldots, n$, and such that
\begin{equation} \label{eq:inRn1overq}
a_{i,m} \geq a_{j,m} \qquad \mbox{for all} \qquad v_i \lessdot v_j
\end{equation}
with $v_i, v_j \in \bar{P}$ and  $m = 1, \ldots, n$.

Given \eqref{eq:phitildeformula}, in order to show $\widetilde{\phi}_n(\prod_{m=1}^n \prod_{i=0}^d x_{i,m}^{a_{i,m}/q}) \in R^{\otimes n}$, we need to verify that \eqref{eq:inRn1overq} implies
\begin{equation}
    \label{eq:inRn}
    \left\lfloor \frac{a_{i,m}}{q} \right\rfloor + \delta_{i,m} \geq \left\lfloor \frac{a_{j,m}}{q} \right\rfloor + \delta_{j,m} \qquad \mbox{for all} \qquad v_i \lessdot v_j
\end{equation}
with $v_i, v_j \in \bar{P}$ and  $m = 1, \ldots, n$. As $\alpha_{i, m} \in [0, q-1]$ for all $i,m$, we have 
\[
\left\lceil \frac{\alpha_{j,m}}{q} - \frac{\alpha_{i,m}}{q}  \right\rceil = \varepsilon_{j,i,m}.
\]
Thus, (b) gives us
\begin{equation}
 \left\lceil \frac{\alpha_{j,m}}{q} -  \frac{\alpha_{i,m}}{q} \right\rceil \geq \delta_{j,m} - \delta_{i,m}
    \label{eq:deltaconditions}
\end{equation}
for all $i,j,m$. Setting $\alpha_{i,m} = q \{ \frac{a_{i,m}}{q}\}$ for each $i,m$ yields
\begin{equation}
\label{eq:alphas}
\frac{a_{i,m}}{q} = \left\lfloor \frac{a_{i,m}}{q} \right\rfloor + \frac{\alpha_{i,m}}{q}.
\end{equation}
In particular, dividing \eqref{eq:inRn1overq} by $q$ and substituting we have
\[
\left\lfloor \frac{a_{i,m}}{q} \right\rfloor - \left\lfloor \frac{a_{j,m}}{q} \right\rfloor \geq   \frac{\alpha_{j,m}}{q} -  \frac{\alpha_{i,m}}{q} .
\]
As the left side of this inequality is an integer, we may round up the right and use \eqref{eq:deltaconditions} to get
\[
\left\lfloor \frac{a_{i,m}}{q} \right\rfloor - \left\lfloor \frac{a_{j,m}}{q} \right\rfloor \geq   \left\lceil \frac{\alpha_{j,m}}{q} -  \frac{\alpha_{i,m}}{q} \right\rceil \geq \delta_{j,m} - \delta_{i,m}
\]
which in turn yields \eqref{eq:inRn} as desired.  


It remains to show that $\phi \in \sD_e^{(n)}$ for all $n \in \mathbb{N}$.  To that end, fixing $n \in \mathbb{N}$ we need to construct a lift $\phi_n$ of $\phi$ to $R^{\otimes n}$ compatible with the diagonal. In other words, we require that the following diagram 
\begin{equation} \label{eq:liftdiagram}
\xymatrix{
(R^{\otimes n})^{1/q} \ar[r]^-{\phi_n} \ar[d]_{\Delta_n^{1/q}} & R^{\otimes n} \ar[d]^{\Delta_n} \\
R^{1/q} \ar[r]_{\phi} & R
}
\end{equation}
is commutative.  We will check that  $\phi_n = \widetilde{\phi}_n|_{(R^{\otimes n})^{1/q}}$ is such a lift.

Consider once again an arbitrary monomial of $(R^{\otimes n})^{1/q}$, with the  form $\prod_{m=1}^n \prod_{i=0}^d x_{i,m}^{a_{i,m}/q}$ with $a_{i,m} \in \mathbb{Z}_{\geq 0}$ for each $i = 0, \ldots, d$ and $m = 1, \ldots, n$ satisfying \eqref{eq:inRn1overq}.  We have
\begin{eqnarray*}
(\phi \circ \Delta_n^{1/q})\left( \prod_{m=1}^n \prod_{i=0}^d x_{i,m}^{a_{i,m}/q} \right) & = & \phi \left( \prod_{i=0}^d x_{i}^{\sum _{m=1}^n a_{i,m}/q} \right)
\end{eqnarray*}
is zero if $\sum_{m=1}^n \a_{i,m} \not \equiv r_i \mod q$ for some $1 \leq i \leq n$.  But this is Case 1 from the construction of $\widetilde{\phi}_n$, so that such monomials are necessarily in the kernel of $\phi_n$ and thus in the kernel of $\Delta_n \circ \phi_n$ as well.  Hence, having checked the commutivity of \eqref{eq:liftdiagram} in Case 1, we may assume going forward that we are in Case 2 where $\sum_{m=1}^n \a_{i,m} \equiv r_i \mod q$ for all $1 \leq i \leq n$.  Here, we have
\begin{eqnarray*}
(\phi \circ \Delta_n^{1/q})\left( \prod_{m=1}^n \prod_{i=0}^d x_{i,m}^{a_{i,m}/q} \right) & = & \phi \left( \prod_{i=0}^d x_{i}^{\sum _{m=1}^n a_{i,m}/q} \right) \\ & = & \prod_{i=0}^d x_{i}^{\lfloor \sum _{m=1}^n a_{i,m}/q \rfloor}
\end{eqnarray*}
and also
\begin{eqnarray*}
(\Delta_n \circ \phi_n)\left( \prod_{m=1}^n \prod_{i=0}^d x_{i,m}^{a_{i,m}/q} \right) & = & \Delta_n \left( \prod_{m=1}^n \prod_{i=0}^d x_{i,m}^{\lfloor a_{i,m}/q \rfloor + \delta_{i,m}} \right) \\ & = & \prod_{i=0}^d x_{i}^{ \sum _{m=1}^n (\lfloor a_{i,m}/q \rfloor + \delta_{i,m}) }
\end{eqnarray*}
so we see that \eqref{eq:liftdiagram} commutes provided that
\[
  \sum _{m=1}^n \left(\left\lfloor \frac{a_{i,m}}{q} \right\rfloor + \delta_{i,m}\right) = \left\lfloor \sum _{m=1}^n \frac{a_{i,m}}{q} \right\rfloor 
\]
for all $i$.  But this is an immediate consequence of \eqref{eq:deltaconditionsc}; we have
\begin{eqnarray*}
\sum _{m=1}^n \left(\left\lfloor \frac{a_{i,m}}{q} \right\rfloor + \delta_{i,m}\right) & = & \left( \sum _{m=1}^n \left\lfloor \frac{a_{i,m}}{q} \right\rfloor \right) + \left( \sum _{m=1}^n \delta_{i,m} \right) \\ & = & \left( \sum _{m=1}^n \left\lfloor \frac{a_{i,m}}{q} \right\rfloor \right) + \left( \left\lfloor \sum _{m=1}^n  \frac{\alpha_{i,m}}{q} \right\rfloor \right) \\ & = & \left\lfloor \sum _{m=1}^n \frac{a_{i,m}}{q} \right\rfloor 
\end{eqnarray*}
as desired.

On the other hand, suppose we have a map $\phi \in \sD_e^{(n)}$ with $\phi(z^{1/q}) = 1$. By \autoref{thm:monomialSplittings}, we may assume that $\phi$ sends monomials to monomials. Then for every choice of $\alpha_{i,m}$, there must be a choice of $\delta_{i,m}$ satisfying \eqref{eq:deltaconditionsa} and \eqref{eq:deltaconditionsc}, simply by choosing $\delta_{i,m}$ to be the exponents in the image of $\xalphaqn$, namely $\phi(\xalphaqn) = \prod_{m=1}^n \prod_{i=0}^d x_{i,m}^{\delta_{i,m}}$.  We will show \eqref{eq:deltaconditionsb} using the following lemma.

\begin{lemma}\label{equalityassumption}
Fix $\prod_{i = 1}^d x_i^{a_i/q} \in R^{1/q}$, and let $\alpha_i = \{a_i/q\}q$, and fix some $v_i \lessdot v_j$.  Then there exists a choice of $a_l', 0 \leq l \leq q$ such that for all $v_k \lessdot v_l$, we have 
\begin{align*}
a_k' &\geq a_l'  \text{ for all } v_k \lessdot v_l\\
\left\{\frac{a_k'}{q}\right\} &= \frac{\alpha_k}{q}, \text{ and } \\
\left\lfloor \frac{a_i'}{q} \right\rfloor &= \left\lfloor \frac{a_j'}{q} \right\rfloor + \epsilon_{j,i}
\end{align*}
where $\epsilon_{j,i} = 1$ if $\alpha_j > \alpha_i$ and $\epsilon_{j,i} = 0$ otherwise. 
\end{lemma}
\begin{proof}
Since $\prod_{i = 1}^d x_i^{a_i/q} \in R^{1/q}$, we have that $a_k \geq a_l$ for all $v_k \lessdot v_l$.  If $\lfloor a_i/q \rfloor = \lfloor a_j/q \rfloor$, then we must have that $\epsilon_{j,i} = 0$ and we are done by letting $a_i' = a_i$.  Otherwise, assume $\lfloor a_i/q \rfloor > \lfloor a_j/q \rfloor$.  Again, if $\lfloor a_i/q \rfloor = \lfloor a_j/q \rfloor + \epsilon_{i,j}$ then we are done, so assume $\lfloor a_i/q \rfloor > \lfloor a_j/q \rfloor + \epsilon_{i,j}$.  Let $c = \lfloor a_i/q \rfloor - \lfloor a_j/q \rfloor - \epsilon_{i,j}$.  Define 
\begin{align*}
a_k' &:= a_k + cq \text{ for } v_k \leq v_j, v_k \neq v_i \text{ and } \\
a_k' &:= a_k  \text{ for all other } k    
\end{align*}
Clearly $\{a_k'/q\} = \{a_k/q\} = \alpha_k/q$, and by construction we have
$$
\left\lfloor \frac{a_i'}{q} \right\rfloor = \left\lfloor \frac{a_j'}{q} \right\rfloor + \epsilon_{j,i}
$$
Note that since $v_i \lessdot v_j$, there is no $v_k$ such that $v_i \leq v_k \leq v_j$.  Then for any $v_k$ such that $v_i \lessdot v_k$, we have $a_i' \geq a_k'$ (since this holds for $i,j$ by construction and for all other $k$ this is just $a_i \geq a_k$).  Further, for any $v_k \leq v_j$ and $v_k \lessdot v_l$, we have $a_k' = a_k + cq \geq a_l + cq \geq a_l'$.
In particular, $\prod_{i=0}^d x_{i,m}^{a_{i}'/q} \in R^{1/q}$ also.
\end{proof}

Now fix some $m$ and some $v_i \lessdot v_j$. By Lemma \ref{equalityassumption} there is some choice of monomial $\xaqn$ such that
\begin{equation} \label{eq:equalityassumption}
\left\lfloor \frac{a_{i,m}}{q} \right\rfloor = \left\lfloor \frac{a_{j,m}}{q} \right\rfloor + \epsilon_{j,i,m}
\end{equation}
Then in particular, since
\begin{equation*}
    \phi\left(\xaqn\right) = \prod_{m=1}^n \prod_{i=0}^d x_{i,m}^{\lfloor a_{i,m}/q\rfloor + \delta_{i,m}} \in R^{\otimes n}
\end{equation*}
we must have
\begin{align*}
    \left\lfloor \frac{a_{j,m}}{q} \right\rfloor + \delta_{j,m} &\leq \left\lfloor \frac{a_{i,m}}{q} \right\rfloor + \delta_{i,m}
    \\
    \implies \delta_{j,m} &\leq \delta_{i,m} + \varepsilon_{j,i,m}
\end{align*}
so that \eqref{eq:deltaconditionsb} also holds as desired.

\end{proof}


\section{Main results}

We say that the poset $P$ has no top nodes if for all $v_j \in P$ there exists a unique $v_i \in \bar{P}$ with $v_i \lessdot v_j$.

\begin{theorem}\label{thm:notopnodes}
If a Hibi ring is diagonally $F$-regular, then so is the ring we get by attaching one more node that covers a single element of the old poset. In particular, posets with no top nodes yield diagonally $F$-regular rings.
\end{theorem}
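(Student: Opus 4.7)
The strategy is to verify the combinatorial characterization of Proposition \ref{Prop:combinatorialproperty} for $R'$ by leveraging the one for $R$. Write $\bar P' = \bar P \cup \{v_{d+1}\}$ with $v_{d+1}$ covering only $v_j \in \bar P$. I would choose exponent vectors defining $z \in R$ and $z' \in R'$ with $r'_i = r_i$ for $0 \leq i \leq d$ and $0 < r'_{d+1} < r_j$ (permissible after arranging $r_j \geq 2$); then, for each fixed $n$, take $q = p^e$ large (in particular, $q > \max_i r'_i$ and $q > n+1$).

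Given valid data $\alpha'_{i,m}$ for $R'$, set $\alpha_{i,m} := \alpha'_{i,m}$ for $i \leq d$; the congruences for $R$ hold since $r'_i = r_i$ at these indices. Since $R$ is diagonally $F$-regular, the combinatorial property furnishes integers $\delta_{i,m}$ satisfying (a), (b), (c) for $R$. Define $\delta'_{i,m} := \delta_{i,m}$ for $i \leq d$ and seek $\delta'_{d+1,m}$ with
\[
  0 \leq \delta'_{d+1,m} \leq \varepsilon_{d+1,j,m} + \delta_{j,m}, \qquad \sum_{m=1}^n \delta'_{d+1,m} = N'_{d+1}.
\]
The total-sum bound $N'_{d+1} \leq N_j + k$, where $k = |\{m : \alpha'_{d+1,m} > \alpha'_{j,m}\}|$, follows from $\sum_m(\alpha'_{d+1,m} - \alpha'_{j,m}) \leq k(q-1)$ combined with an elementary comparison of floor functions. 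The main obstacle, and the hardest step, is the \emph{pointwise} lower bound $\delta_{j,m} \geq -\varepsilon_{d+1,j,m}$; without it, some of the intervals above are empty, and the $\delta$'s that Proposition \ref{Prop:combinatorialproperty} delivers need not satisfy this constraint. I would resolve it by an exchange argument on the polytope of valid $\delta$'s for $R$: whenever $\delta_{j,m_0} < -\varepsilon_{d+1,j,m_0}$ for some $m_0$, transfer weight between $m_0$ and another index $m_1$, compensating via shifts in $\delta_{k,m}$ for $v_k$ below or above $v_j$ in $\bar P$ so that (a), (b), (c) remain intact. Iterating produces a $\delta$ respecting all pointwise bounds, after which $\delta'_{d+1,m}$ can be distributed. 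Since the only change in maximality in passing from $P$ to $P'$ is the loss at $v_j$ (which merely relaxes condition (a)), the conditions for $R'$ at indices $\leq d$ follow from those for $R$, so $R'$ is $n$-diagonally $F$-regular for every $n$.

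For the ``in particular'' clause, posets with no top nodes are precisely those obtainable from the empty poset by iteratively adjoining nodes each having a unique lower cover. The base case $P = \emptyset$ yields $R = \kay[t]$, a polynomial ring and hence diagonally $F$-regular, so repeated application of the main statement gives the desired conclusion.
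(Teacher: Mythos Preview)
Your overall architecture matches the paper's proof exactly: both restrict the data $\alpha_{i,m}$ to $i\le d$, invoke Proposition~\ref{Prop:combinatorialproperty} for $R$ to obtain $\delta_{i,m}$ satisfying (a)--(c), prove the total-sum inequality $\sum_m(\varepsilon_{d+1,d,m}+\delta_{d,m})\ge N_{d+1}$, and then distribute $\delta_{d+1,m}$ inside the intervals $[0,\varepsilon_{d+1,d,m}+\delta_{d,m}]$. Your setup of $z,z'$ differs cosmetically from the paper's (which takes $r_i$ to be the maximal upward path length in $\bar{P'}$ for all $i\le d+1$), but the substance is identical.

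Where you diverge is precisely at the pointwise nonemptiness of those intervals. The paper does not run an exchange argument; it simply asserts that each $\delta_{d,m}\ge 0$, so that $\varepsilon_{d+1,d,m}+\delta_{d,m}\ge 0$ termwise. This is legitimate because of how the $\delta$'s arise in the ``only if'' direction of Proposition~\ref{Prop:combinatorialproperty}: they are the exponents of the monomial $\phi_n\bigl(\prod x_{i,m}^{\alpha_{i,m}/q}\bigr)$, which lies in $R^{\otimes n}\subset S^{\otimes n}$, and exponents of monomials in a polynomial ring are nonnegative. So the $\delta$'s delivered by Proposition~\ref{Prop:combinatorialproperty} can always be taken with $\delta_{i,m}\ge 0$ for \emph{all} $i$, not just the maximal ones; condition~(a) understates what the proof actually produces. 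With this observation the pointwise bound is free and no exchange is needed.

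Your proposed exchange argument, by contrast, is where the gap in your write-up lies. Shifting weight between columns $m_0$ and $m_1$ while ``compensating via shifts in $\delta_{k,m}$ for $v_k$ below or above $v_j$'' must simultaneously preserve (b) for \emph{every} covering relation in $\bar P$ and (c) for every $i$; you have not shown this can always be done, and in a general poset the constraints (b) couple many vertices at once, so a local swap can easily break an inequality elsewhere. The argument as sketched is not a proof. Replace it with the observation above and your proof becomes essentially the paper's.
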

\begin{proof}
Let $R= \kay[\mathcal{I}(P)]$, where $P = \{v_1, \dots, v_d\}$, and suppose $R$ is diagonally $F$-regular.  Let $P' = P \cup \{v_{d+1}\}$ where $v_{d+1}$ covers some unique element in $P$.  By possible relabeling, let this element be $v_d$ so that $v_d \lessdot v_{d+1}$.  Then let $R' = \kay[\mathcal{I}(P')]$.  As usual, let $v_0$ be the minimal element of $\bar{P}$ (and so also of $\bar{P'}$).
Let $z' = \prod_{i=0}^{d+1} x_i^{r_i}$, where $r_i$ is the maximal length of any (upwards) path between $v_i$ and any maximal element in $\bar{P'}$, and let $z = \prod_{i=0}^{d} x_i^{r_i}$.  We note that this choice of $z, z'$ still satisfies $r_i > r_j$ for all $v_i \lessdot v_j$ in $\bar{P}$ and $\bar{P'}$ respectively, so that we can use Proposition \ref{Prop:combinatorialproperty} with this choice of $z, z'$.

Note that by Proposition \ref{Prop:combinatorialproperty}, it suffices to show that for any choice of $\alpha_{i,m} \in [0,q-1]$ with $0 \leq i \leq d+1$ and $1 \leq m \leq n$ such that $\sum_{m=1}^n \alpha_{i,m} \equiv r_i \pmod{q}$ there exists a choice of $\delta_{i,m}$ satisfying
\begin{enumerate} 
    \item $\delta_{i,m} \geq 0$ for all $m$ whenever $v_i$ is maximal in $P$,
    \item $\delta_{j,m} \leq \varepsilon_{j, i, m} + \delta_{i,m}$  for all  $m$ whenever  $v_i \lessdot v_j$,  and 
    \item $\sum_{m=1}^n \delta_{j,m} = N_j $
  \end{enumerate}
where as before, $N_j :=\lfloor \sum_{m=1}^n \frac{\a_{j,m}}{q} \rfloor$ and $\varepsilon_{j,i,m} = \left\lceil \frac{\alpha_{j,m}}{q} - \frac{\alpha_{i,m}}{q} \right\rceil$.

Fix some $\alpha_{i,m} \in [0, q-1]$ such that $\sum_{m=1}^n \alpha_{i,m} \equiv r_i \pmod{q}$, with $0 \leq i \leq d+1$ and $1 \leq m \leq n$.  Then clearly we have such a choice of $\alpha_{i,m}$ for $0 \leq i \leq d$ as well.  Since $R$ is diagonally $F$-regular, we already have such a choice of $\delta_{i,m}$ satisfying \eqref{eq:deltaconditionsa}-\eqref{eq:deltaconditionsc} for $0 \leq i \leq d$.
We will specify $\delta_{d+1,m}$ while making use of the assumption that the added element $v_{d+1}$ covers a unique element $v_d$.  

    \begin{Claim*}
    $\sum_{m=1}^n \left( \lceil \frac{\a_{d+1,m}}{q} - \frac{\a_{d,m}}{q}\rceil + \delta_{d,m}\right) \geq N_{d+1}$.
    \end{Claim*}
    
    \begin{proof}[Proof of Claim.]
    We  have $\lceil \frac{\a_{d+1,m}}{q} - \frac{\a_{d,m}}{q}\rceil \geq  \frac{\a_{d+1,m}}{q} - \frac{\a_{d,m}}{q}$ for each $m = 1, \ldots, n$, and summing up gives
    \[
    \left( \sum_{m=1}^n \left\lceil \frac{\a_{d+1,m}}{q} - \frac{\a_{d,m}}{q}\right\rceil \right) + \left( \sum_{m=1}^n \frac{\a_{d,m}}{q} \right) \geq \left(\sum_{m=1}^n \frac{\a_{d+1,m}}{q}\right).
    \]
    Inequalities are preserved after rounding down, whence 
    \[
    \left( \sum_{m=1}^n \left\lceil \frac{\a_{d+1,m}}{q} - \frac{\a_{d,m}}{q}\right\rceil \right) + N_d \geq N_{d+1}.
    \]
    The claim now follows using the fact that $\sum_{m=1}^n \delta_{d,m} = N_d$.
    \end{proof}

    Note also that $$ \frac{\a_{d+1,m}}{q} - \frac{\a_{d,m}}{q} \geq - \frac{\a_{d,m}}{q} > -1$$ as $0 \leq \a_{d,m} < q$, so we have $\lceil \frac{\a_{d+1,m}}{q} - \frac{\a_{d,m}}{q}\rceil \geq 0$.  In particular, as each $\delta_{d,m} \geq 0$, the terms of the sum in the claim above are non-negative integers.  Thus, there is a choice of integers $\delta_{d+1,m}$ for $m = 1, \ldots, n$ so that we have a choice of $\delta_{i,m}$ for $1 \leq i \leq d+1$ which satisfies \eqref{eq:deltaconditionsa}-\eqref{eq:deltaconditionsc} and $R'$ is also diagonally $F$-regular.
\end{proof}

We also know the following result: 
\begin{theorem}[{\cite[Proposition 5.5]{SmolkinCarvajal}}]
Let $R$ and $S$ be two diagonally $F$-regular $\kay$-algebras. Then $R\otimes_\kay S$ is diagonally $F$-regular.
\label{thm:tensorDFR}
\end{theorem}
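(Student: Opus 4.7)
The plan is to build splitting maps on $R \otimes_\kay S$ by externally tensoring splitting maps coming separately from $R$ and from $S$. First, I would reduce to checking a single element: by the single-element criterion stated after \autoref{def.diagCartAlg}, it suffices, for each fixed $n \geq 1$, to exhibit some $y \in R \otimes_\kay S$ with $(R \otimes_\kay S)_y$ smooth and some $\varphi \in \sD^{(n)}_e(R \otimes_\kay S)$ satisfying $\varphi(y^{1/q}) = 1$. Choose $y_R \in R$ and $y_S \in S$ with $R_{y_R}$ and $S_{y_S}$ smooth, and set $y := y_R \otimes y_S$; then $(R \otimes_\kay S)_y \cong R_{y_R} \otimes_\kay S_{y_S}$ is smooth as a tensor product of smooth $\kay$-algebras.

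Next, fix $n$. Using diagonal $F$-regularity of $R$ and $S$, choose $\varphi_R \in \sD^{(n)}_{e}(R)$ with $\varphi_R(y_R^{1/q}) = 1$ and $\varphi_S \in \sD^{(n)}_{e}(S)$ with $\varphi_S(y_S^{1/q}) = 1$; a common $e$ can be arranged by precomposing the smaller-$e$ map with a suitable power of Frobenius. Let $\widehat\varphi_R$ and $\widehat\varphi_S$ denote their diagonal-compatible lifts to $R^{\otimes n}$ and $S^{\otimes n}$. I would then set $\varphi := \varphi_R \otimes_\kay \varphi_S$ and $\widehat\varphi := \widehat\varphi_R \otimes_\kay \widehat\varphi_S$, and verify: (i) under the canonical identification $(R \otimes_\kay S)^{1/q} \cong R^{1/q} \otimes_\kay S^{1/q}$, the map $\varphi$ defines a $p^{-e}$-linear map on $R \otimes_\kay S$; (ii) under the analogous isomorphisms $((R \otimes_\kay S)^{\otimes n})^{1/q} \cong (R^{\otimes n})^{1/q} \otimes_\kay (S^{\otimes n})^{1/q}$ and $(R \otimes_\kay S)^{\otimes n} \cong R^{\otimes n} \otimes_\kay S^{\otimes n}$, the diagonal $\Delta_n$ on $R \otimes_\kay S$ factors as $\Delta_n^R \otimes \Delta_n^S$, so the compatibility square for $\widehat\varphi$ is the external tensor product of the (commutative) compatibility squares for $\widehat\varphi_R$ and $\widehat\varphi_S$; and (iii) $\varphi(y^{1/q}) = \varphi_R(y_R^{1/q}) \otimes \varphi_S(y_S^{1/q}) = 1 \otimes 1 = 1$.

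The main subtle point is justifying the natural identification $(R \otimes_\kay S)^{1/q} \cong R^{1/q} \otimes_\kay S^{1/q}$ and its $n$-fold analogue; this works cleanly when $\kay$ is perfect (so that $\kay = \kay^{1/q}$), and otherwise one must base change to $\kay^{1/q^\infty}$ and descend, or reason via the universal property of the tensor product. Once these identifications are in hand, the remaining work is a diagram chase that factors slot-by-slot, so the compatibility of $\widehat\varphi$ with the diagonal reduces to the compatibilities already known for $\widehat\varphi_R$ and $\widehat\varphi_S$. The splitting identity on $y^{1/q}$ is then immediate, completing the argument.
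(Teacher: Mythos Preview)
The paper does not supply its own proof of this statement; it is quoted from \cite[Proposition~5.5]{SmolkinCarvajal} and used as a black box in the proof of \autoref{thm:comparabletopnodes}. Your outline is essentially the standard argument: tensor the splittings and their diagonal-compatible lifts factor by factor, and observe that the resulting compatibility square commutes because each factor's does.

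One point deserves tightening. The phrase ``precomposing the smaller-$e$ map with a suitable power of Frobenius'' does not literally work: the composite $\varphi \circ F^k$ is $p^k$-semilinear rather than $R$-linear, so it does not lie in $\Hom_R(R^{1/p^{e+k}}, R)$. The clean fix uses the Cartier-algebra product instead. If $\varphi \in \sD^{(n)}_e(R)$ satisfies $\varphi(y_R^{1/p^e}) = 1$, then the $m$-fold product $\varphi^m \in \sD^{(n)}_{me}(R)$ satisfies $\varphi^m\bigl((y_R^{\,1+p^e+\cdots+p^{(m-1)e}})^{1/p^{me}}\bigr) = 1$. Choosing $m, m'$ with $m e_R = m' e_S$ yields a common level, at the cost of replacing $y_R, y_S$ by suitable powers---harmless for the single-element criterion, since $R_{y_R^N} = R_{y_R}$ is still smooth. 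With that adjustment (and your correct observation that the identification $(R\otimes_\kay S)^{1/q} \cong R^{1/q} \otimes_\kay S^{1/q}$, as well as $R\otimes_\kay S$ being a domain, is cleanest when $\kay$ is perfect), the argument goes through.
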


Combining the above two theorems, we get:
\begin{theorem}\label{thm:comparabletopnodes}
If every top node in a poset $P$ is comparable to every other top node, then the associated Hibi ring $R = \kay[\mathcal I(P)]$ is diagonally $F$-regular. 
\end{theorem}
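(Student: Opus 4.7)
My plan is to proceed by induction on $|P|$, using Theorem \ref{thm:notopnodes} to peel off maximal elements that are not top nodes, and Theorem \ref{thm:tensorDFR} to peel off the unique maximum of $P$ once it is the greatest top node. The base case $|P| = 0$ is immediate, since $\kay[\mathcal I(\emptyset)] \cong \kay[t]$ is a polynomial ring and hence diagonally $F$-regular.

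For the inductive step, first suppose $P$ has a maximal element $v$ that is not a top node, so that $v$ covers a unique element of $\bar P$. I would set $P' := P \setminus \{v\}$ and observe that the top nodes of $P'$ coincide with those of $P$: no element of $P$ covers $v$ (as $v$ is maximal), so deleting $v$ does not alter which elements any other node covers. In particular the top nodes of $P'$ still form a chain, so the inductive hypothesis gives that $\kay[\mathcal I(P')]$ is diagonally $F$-regular, and Theorem \ref{thm:notopnodes} reattaches $v$ to yield the same for $\kay[\mathcal I(P)]$.

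Otherwise every maximal element of $P$ is a top node. Since distinct top nodes are comparable by hypothesis while distinct maximal elements are incomparable, $P$ must have a unique maximum $t_s$, namely the greatest top node. Setting $P' := P \setminus \{t_s\}$, I would then establish an isomorphism $\kay[\mathcal I(P)] \cong \kay[\mathcal I(P')] \otimes_{\kay} \kay[y]$ as follows. Because $t_s$ is the unique maximum, the only poset ideal of $P$ containing $t_s$ is $P$ itself, so $\mathcal I(P) = \mathcal I(P') \sqcup \{P\}$. The corresponding new monomial generator $g_P = g_{P'} \cdot x_{t_s}$ is the unique generator in which the variable $x_{t_s}$ appears, so grading by $x_{t_s}$-degree exhibits $\kay[\mathcal I(P)]$ as a free polynomial extension $\kay[\mathcal I(P')][g_P]$. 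The top nodes of $P'$ are $t_1 < \cdots < t_{s-1}$, still a chain, so by induction $\kay[\mathcal I(P')]$ is diagonally $F$-regular, and Theorem \ref{thm:tensorDFR} applied to the tensor product with the polynomial ring $\kay[y]$ then finishes the argument.

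The main technical step is the tensor product decomposition in the second case, which records the fact that adjoining a unique maximum to a poset corresponds to adjoining a free polynomial variable to the Hibi ring. Everything else is routine bookkeeping about which elements remain top nodes under deletion, and I do not foresee any serious obstacles beyond this verification.
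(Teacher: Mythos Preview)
Your proof is correct and takes a genuinely different route from the paper's. The paper inducts on the number of top nodes: at each step it takes the greatest top node $v_n$, decomposes $P$ into $S_{>v_n}$, $S_{<v_n}$, and $S_{\not\sim v_n}$, uses Proposition~\ref{prop:posetOfTensorProd} to identify $\kay[\mathcal I(\overline{S_{>v_n}}\oplus S_{<v_n})]$ with a tensor product of two Hibi rings, and then reattaches the elements of $S_{\not\sim v_n}$ one at a time via Theorem~\ref{thm:notopnodes}. Your argument instead inducts on $|P|$ and removes a single element at each step, invoking Theorem~\ref{thm:notopnodes} when some maximal element is not a top node, and otherwise observing that $P$ has a unique maximum whose removal corresponds to stripping off a free polynomial variable. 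This is more elementary: you never need the full tensor-product identification of Proposition~\ref{prop:posetOfTensorProd}, only the special case where one factor is $\kay[y]$, which you verify directly from the generators. The paper's decomposition is coarser and perhaps more structural, but your finer peeling-off argument reaches the same conclusion with less overhead.
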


We will use the following result: 
\begin{proposition}[c.f. \cite{ARAMOVA2000431}]
 Given two finite posets $P$ and $P'$, the tensor product of their associated Hibi rings is:
 \begin{equation*}
     \kay[\mathcal{I}(P)] \otimes_\kay \kay [\mathcal{I}(P')] = \kay[\mathcal{I}(\bar{P} \oplus P')]
 \end{equation*}
 where $\bar{P}$ is formed from $P$ by adding a unique minimal element, and $A \oplus B$ is the poset which is has the elements and relations of $A \cup B$, with the additional relations $y \leq x$ if $x \in A, y \in B$. 
 \label{prop:posetOfTensorProd}
\end{proposition}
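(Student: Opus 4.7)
The plan is to realize both sides of the stated equality as affine toric rings and exhibit an explicit isomorphism of their defining affine semigroups. Using the description of Hibi rings recalled in the Background, I would first identify $\kay[\mathcal I(P)]$ with the $\kay$-subalgebra of $\kay[x_0,\ldots,x_n]$ whose monomials $x_0^{a_0}\cdots x_n^{a_n}$ are indexed by the semigroup
\[
M_P = \bigl\{(a_0,\ldots,a_n)\in\bZ_{\geq 0}^{n+1}\: a_j\leq a_i \text{ whenever } v_i\lessdot v_j \text{ in } \bar P\bigr\},
\]
and $M_{P'}$ analogously for $P'$. Under these identifications, the tensor product over $\kay$ is the semigroup algebra on $M_P\oplus M_{P'}$.

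The next step is to identify the semigroup of $\kay[\mathcal I(\bar P\oplus P')]$ explicitly. Denoting the added minimum of $\overline{\bar P\oplus P'}$ by $c_0$ and the minimum of $\bar P$ by $v_0$, the cover relations in $\overline{\bar P\oplus P'}$ are: (i) those inherited from $\bar P$; (ii) those inherited from $P'$; (iii) $w_j\lessdot v_0$ for each $w_j$ maximal in $P'$, since every element of $P'$ sits below every element of $\bar P$; and (iv) $c_0\lessdot w_j$ for each $w_j$ minimal in $P'$, these being the minimal elements of $\bar P\oplus P'$ when $P'$ is nonempty (the case $P'=\emptyset$ reducing to $\bar P\oplus P'=\bar P$, which is straightforward). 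Writing an exponent tuple as $(d_0,e_0,a_1,\ldots,a_n,c_1,\ldots,c_m)$, indexed respectively by $c_0,v_0,v_1,\ldots,v_n,w_1,\ldots,w_m$, the defining semigroup $N$ is the subset of $\bZ_{\geq 0}^{n+m+2}$ cut out by the inequalities coming from (i)--(iv).

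Finally, I would exhibit the semigroup isomorphism $\Psi\:M_P\oplus M_{P'}\to N$ given by
\[
\Psi\bigl((A_0,\ldots,A_n),(B_0,C_1,\ldots,C_m)\bigr)=(A_0+B_0,\,A_0,\,A_1,\ldots,A_n,\,A_0+C_1,\ldots,A_0+C_m),
\]
with inverse $(d_0,e_0,a_1,\ldots,a_n,c_1,\ldots,c_m)\mapsto\bigl((e_0,a_1,\ldots,a_n),(d_0-e_0,c_1-e_0,\ldots,c_m-e_0)\bigr)$. The verification that these are mutually inverse semigroup homomorphisms is a direct check of each of the inequalities (i)--(iv) under the change of variables; the main obstacle is purely bookkeeping. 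The one crucial point is that constraints (iii) and (iv), propagated by transitivity along (ii), force $e_0\leq c_j\leq d_0$ for every $j$, and this is exactly what allows the shift by $e_0$ on the $P'$-coordinates to be a well-defined bijection. Conceptually, the junction covers between the two poset pieces contribute precisely the inequality that couples them, and absorbing $e_0$ into the $P'$-coordinates (and $d_0$) decouples them, yielding the tensor product structure.
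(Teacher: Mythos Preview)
Your argument is correct. The paper does not actually supply a proof of this proposition; it simply records the statement with a reference to \cite{ARAMOVA2000431} and uses it as a black box in the proof of \autoref{thm:comparabletopnodes}. So there is nothing in the paper to compare your argument against.

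That said, your approach is the natural one and goes through cleanly. The identification of $\kay[\mathcal I(P)]$ with the semigroup algebra on $M_P$ is exactly the description of Hibi monomials recalled at the end of the Background section, so you are on solid ground invoking it. Your enumeration of the cover relations in $\overline{\bar P\oplus P'}$ is correct: because $v_0$ is the unique minimum of $\bar P$, the only ``junction'' covers are $w_j\lessdot v_0$ for $w_j$ maximal in $P'$, and the new global minimum $c_0$ only covers the minimal elements of $P'$. The key observation that the junction inequalities (iii) and (iv), propagated through (ii), yield $e_0\leq c_j\leq d_0$ for every $j$ is exactly what makes the shift-by-$e_0$ inverse well defined and land in $\bZ_{\geq 0}^{m+1}$; you have isolated the right point. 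One small thing worth stating explicitly in the write-up: the inequality $e_0\leq d_0$ (needed for nonnegativity of $B_0=d_0-e_0$) follows from $e_0\leq c_j\leq d_0$ when $P'\neq\emptyset$, and from the direct cover $c_0\lessdot v_0$ when $P'=\emptyset$; you allude to this with your parenthetical remark on the empty case, but it is cleaner to say so outright.
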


\begin{proof}[Proof of Theorem \ref{thm:comparabletopnodes}]
We show that any poset which has the property that every top node is comparable to every other top node can be constructed by adding elements covering unique elements or taking tensor products of smaller Hibi rings with this property. We proceed by induction on the number $n$ of top-nodes in $P$. If $n=0$, then $R$ is diagonally $F$-regular by \autoref{thm:notopnodes}. 

Now suppose $n>0$ and that $P$ has the property that all of its top nodes are comparable. Label the top nodes $v_1, \dots, v_n$, so that $v_1 \leq \dots \leq v_n$ and let $v_n$ be the largest top node.  Let $S_{> v_i}, S_{< v_i},$ and $S_{\not\sim v_i}$ be the subsets of $P$ containing the elements greater than $v_i$, less than $v_i$ and not comparable to $v_i$ respectively (and the relations in each of these subsets inherited from $P$).  In particular, we have that none of the elements of $S_{>v_n}$ cover any of the elements of $S_{<v_n}$ or $S_{\not\sim v_n}$, since $S_{> v_n}$ contains no top nodes.  By construction, $P$ is just $(\overline{S_{> v_n}} \oplus  S_{< v_n}) \cup S_{\not\sim v_n}$ with an additional relation for every element in  $S_{\not \sim v_n}$; namely each element in this subset covers a unique element in $S_{< v_n}\cup S_{\not \sim v_n}$ since $S_{\not \sim v_n}$ does not contain any top nodes, so that the elements of $S_{\not\sim v_n}$ can be added to $(\overline{S_{> v_n}} \oplus  S_{< v_n})$ to form $P$ by simply adding elements covering unique elements.  

As $S_{< v_n}$ has only $n-1$ top nodes, we know that $\kay[\mathcal I(S_{< v_n})]$ is diagonally $F$-regular by the inductive hypothesis. Then $\kay[\mathcal I(\overline{S_{> v_n}} \oplus  S_{< v_n})]$ is diagonally $F$-regular by \autoref{thm:tensorDFR} and Proposition \ref{prop:posetOfTensorProd}. It follows that $R$ is diagonally $F$-regular by \autoref{thm:notopnodes}.
\end{proof}

\begin{example}
Consider the poset seen in \autoref{fig:dfrExample}; call it $Q$. The two top-nodes of $Q$, $p$ and $u_1$, are comparable to each other, so \autoref{thm:comparabletopnodes} predicts that $\kay[\mathcal I(Q)]$ is diagonally $F$-regular. Indeed, we can write $Q$ as $Q = (\bar{P_1} \oplus P_2) \cup  \{q\}$, where $P_1$ is the subposet $P_1 = \{v_1, v_2, v_3\}$ with no relations, $P_2$ is the subposet $P_2 = \{u_1, u_2, u_3, u_4\}$  with the relations $u_1 \geq u_3$, $u_1 \geq u_4$, $u_2 \geq u_4$, and $q$ covers the unique element $u_4$.

\begin{figure}
    \centering
  \begin{tikzpicture}
\tikzset{vertex/.style = {shape=circle,draw,minimum size=1 em}}
\node[vertex] (v1) at  (1,8) {$v_1$};
\node[vertex] (v2) at  (3,8) {$v_2$};
\node[vertex] (v3) at  (5,8) {$v_3$};
\node[vertex] (p) at  (3,6) {$p$};
\node[vertex] (u1) at  (3,4) {$u_1$};
\node[vertex] (u2) at  (5,4) {$u_2$};
\node[vertex] (q) at (7,4) {$q$};
\node[vertex] (u3) at (1,2)  {$u_3$};
\node[vertex] (u4) at (5,2) {$u_4$};
\node[vertex] (min) at (3,0) {$-\infty$};

\draw (v1) -- (p);
\draw (v2) -- (p);
\draw (v3) -- (p) -- (u1);
\draw (p) -- (u2) -- (u4);
\draw (u1) -- (u4);
\draw (u1) -- (u3);
\draw (q) -- (u4);
\draw (u4) -- (min);
\draw (u3) -- (min);

\end{tikzpicture}
    \caption{A poset with all top nodes comparable to one another}
    \label{fig:dfrExample}
\end{figure}
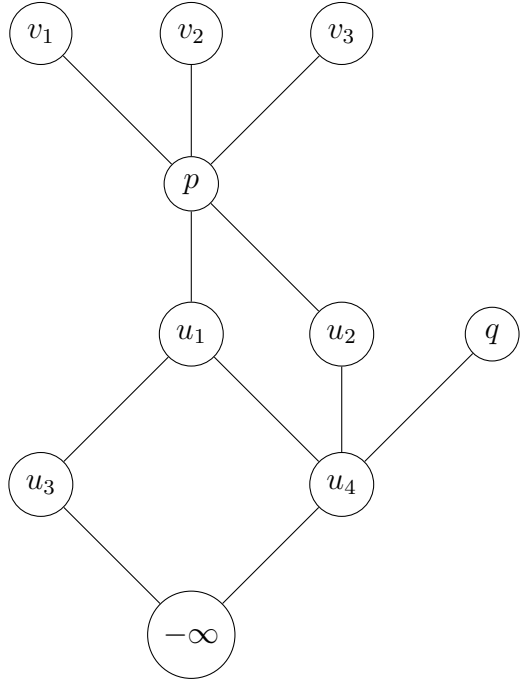

Note that $P_1 = (\bar{\varnothing} \oplus \{u_4,u_5\}) \cup \{u_2\}$ is constructed from the poset $\{u_4,u_5\}$ with no relations and the empty poset, and adding an element $u_2$ covering a unique element $u_4$. Thus we can see that $\kay[\mathcal I(Q)] $ is diagonally $F$-regular through repeated applications of \autoref{thm:notopnodes} and \autoref{thm:tensorDFR}.
\end{example}


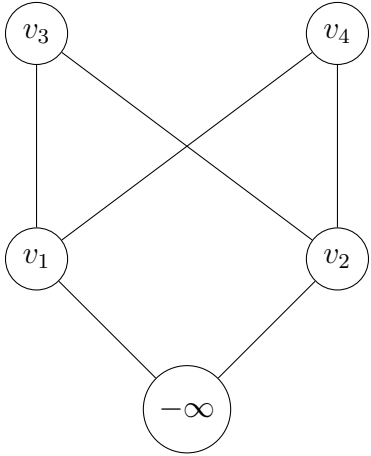
\begin{figure}[H]
    \centering
    \begin{tikzpicture}

\tikzset{vertex/.style = {shape=circle,draw,minimum size=1.5em}}
\node[vertex] (a) at  (0,0) {$-\infty$};
\node[vertex] (b) at  (-2,2) {$v_1$};
\node[vertex] (c) at  (2,2) {$v_2$};
\node[vertex] (d) at  (-2,5) {$v_3$};
\node[vertex] (e) at (2,5) {$v_4$};
\draw (a) to (b);
\draw (a) to (c);
\draw (b) to (d);
\draw (b) to (e);
\draw (c) to (d);
\draw (c) to (e);
\end{tikzpicture}
    
    \caption{A poset for which the corresponding Hibi ring is not 3-diagonally $F$-regular in any characteristic.}
    \label{fig:4nodes}
\end{figure}
\begin{example}
\label{ex:not3diagFreg}
On the other hand, let $R$ be the Hibi ring over $\kay$ associated to the poset shown in  \autoref{fig:4nodes}. Then $R$ is the first Hibi ring that's not in the scope of \autoref{thm:comparabletopnodes}, and it is \emph{not} 3-diagonally $F$-regular in any characteristic. Indeed, using Proposition \ref{Prop:combinatorialproperty}, we can show there is no splitting of $t^2 x_1 x_2$ in $\sD^{(3)}_e(R)$ in any $e$. \autoref{fig:4nodes_eg} provides some choices of $\alpha_{i,m}$ for which the combinatorial problem of Proposition \ref{Prop:combinatorialproperty} is unsolvable, given $q > 2$. Note that, as a toric variety, the cone $\sigma$ corresponding to $R$ is given by the following extremal rays:
\[
\begin{pmatrix}
    1 \\
    -1 \\
    0 \\
    0 \\
    0
\end{pmatrix}, 
\begin{pmatrix}
    1 \\
    0 \\
    -1 \\
    0 \\
    0
\end{pmatrix}, 
\begin{pmatrix}
    0 \\
    1 \\
    0 \\
    -1 \\
    0
\end{pmatrix}, 
\begin{pmatrix}
    0 \\
    1 \\
    0 \\
    0 \\
    -1
\end{pmatrix}, 
\begin{pmatrix}
    0 \\
    0 \\
    1 \\
    -1 \\
    0
\end{pmatrix}, 
\begin{pmatrix}
    0 \\
    0 \\
    0 \\
    0 \\
    -1
\end{pmatrix}, 
\]
whence one can see that $R$ has terminal singularities by \cite[Proposition 11.4.12]{CoxLittleSchenck}, if $\operatorname{char} \kay = 0$. We can also use \cite{WatanabeYoshidaMinimalRelHK} to see that the $F$-signature of $R$ is $11/30$, if $\operatorname{char} \kay > 0$.

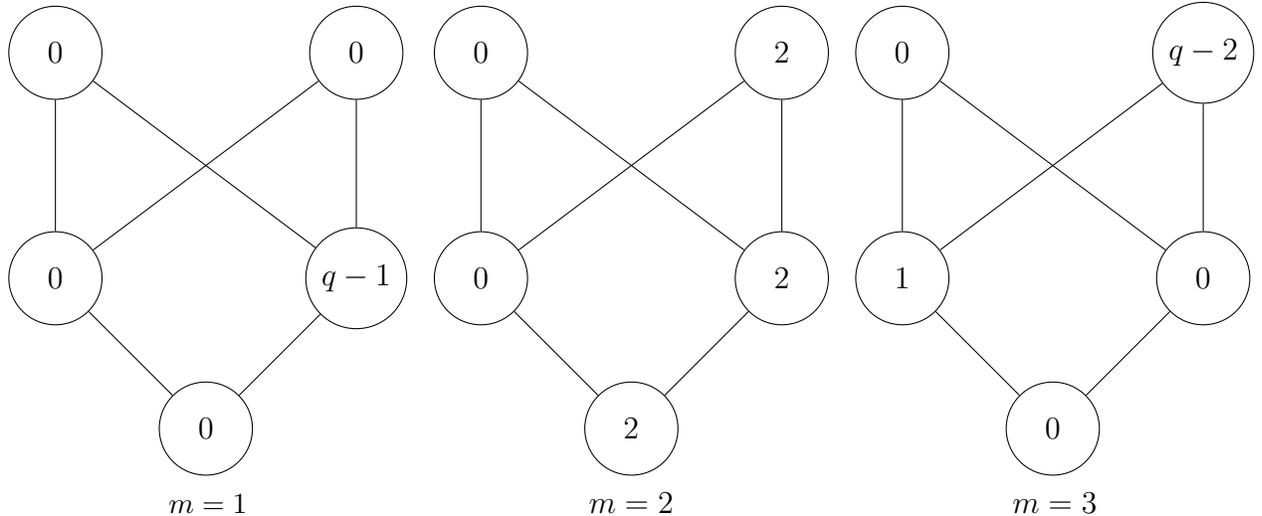
\begin{figure}[H]
    \centering
\begin{tabular}{ccc}
\begin{tikzpicture}
\tikzset{vertex/.style = {shape=circle,draw,minimum size=3em}}
\node[vertex] (a) at  (0,0) {$ 0$};
\node[vertex] (b) at  (-2,2) {$ 0$};
\node[vertex] (c) at  (2,2) {$ q-1$};
\node[vertex] (d) at  (-2,5) {$ 0$};
\node[vertex] (e) at (2,5) {$ 0$};

\draw (a) -- (b);
\draw (a) -- (c); 
\draw (b) -- (d);
\draw (b) -- (e);
\draw (c) -- (d);
\draw (c) -- (e);

\end{tikzpicture} &

\begin{tikzpicture}
\tikzset{vertex/.style = {shape=circle,draw,minimum size=3em}}
\node[vertex] (a2) at  (6,0) {$ 2$};
\node[vertex] (b2) at  (4,2) {$ 0$};
\node[vertex] (c2) at  (8,2) {$ 2$};
\node[vertex] (d2) at  (4,5) {$ 0$};
\node[vertex] (e2) at  (8,5) {$ 2$};

\draw (a2) -- (b2);
\draw (a2) -- (c2);
\draw (b2) -- (d2);
\draw (b2) -- (e2);
\draw (c2) -- (d2);
\draw (c2) -- (e2);

\end{tikzpicture} &

\begin{tikzpicture}
\tikzset{vertex/.style = {shape=circle,draw,minimum size=3em}}
\node[vertex] (a3) at  (12,0) {$ 0$};
\node[vertex] (b3) at  (10,2) {$ 1$};
\node[vertex] (c3) at  (14,2) {$ 0$};
\node[vertex] (d3) at  (10,5) {$ 0$};
\node[vertex] (e3) at  (14,5) {$ q-2$};

\draw (a3) -- (b3);
\draw (a3) -- (c3);
\draw (b3) -- (d3);
\draw (b3) -- (e3);
\draw (c3) -- (d3);
\draw (c3) -- (e3);
\end{tikzpicture} \\
\small $m = 1$ & $m = 2$ & $m = 3$
\end{tabular}

    \caption{Choices of $\alpha_{i,m}$ for which the combinatorial condition described in  \autoref{Prop:combinatorialproperty} has no solution for $z = t^2 x_1 x_2$ and $q > 2$. 
    }
    \label{fig:4nodes_eg}
\end{figure}







\end{example}

\appendix
\section{Proof of \autoref{thm:toricDn}}

This appendix will be spent proving \autoref{thm:toricDn}. The proof follows along the same lines as \cite[Theorem 6.4]{Smolkin}. We restate it here for convenience: 
\begin{theorem*}
  Let $R$ be a toric ring over $\kay$ and $X = \Spec R$. Set $n, e> 0$. Then $\sD^{(n)}_{e}(R)$ is generated as a $\kay$-vector space by the maps  $\pi_a \in \Hom_{R}(R^{1/q}, R)$ satisfying the following: for all sets of vectors $u_1, \ldots, u_n \in \frac{1}{q}\bZ^d$, there exist $v_i \in u_i + \bZ^d$ such that $v_i \in P_{-K_X}^\circ$ and $\sum_{i=1}^n v_i \in a - P_{-K_X}^\circ$. 
\end{theorem*}

We work in the following setting. 

\begin{setting}  We let $\Sigma$ be a rational cone in $\bR^d$ and $R = R(\Sigma)$ the associated toric ring over a perfect field $\kay$ of characteristic $p > 0$. Set $X = \Spec R$. For all rays $\rho \subseteq \Sigma$, we let  $v_\rho$ denote the primitive generator of $\rho$. That is, $v_{\rho}$ is the shortest nonzero vector in $\bZ^d \cap \rho$. Let $\Delta_n$ denote the diagonal embedding $X \to X^{\times n}$. Note that $\Delta_n$ induces the diagonal embedding $T \to T^{\times n}$, where $T\subseteq X$ is the dense torus contained in $X$. We write $\mathscr I(\Delta_n(X))$ to denote the sheaf of ideals in $\sO_{X^{\times n}}$ cutting out $\Delta_n(X)$ in $X^{\times n}$. Similarly, we write $\mathscr I(\Delta_n(T))$ to denote the sheaf of ideals in $\sO_{T^{\times n}}$ cutting out $\Delta_n(T)$. All tensor products are taken over $\kay$ and all fiber products are taken over $\Spec \kay$. For a sequence of vectors $a_1, \ldots, a_n \in \frac{1}{q}\bZ^d$, we set $c_{a_\bullet}\coloneqq c_{a_1, \ldots, a_n}$ and we set $\bm{\pi}_{a_\bullet} = \pi_{a_1 }\otimes \cdots \otimes \pi_{a_n}$.
\end{setting}

As $\kay$ is a field of characteristic $p > 0$, any $\kay$-scheme $Y$ comes with an \emph{absolute Frobenius morphism}, $F: Y\to Y$, defined as follows.  On the level of sets, $F$ is the identity. On the level of structure sheaves, we define:
\[ 
  F^\#: F_* \sO_Y \to \sO_Y, f\mapsto f^p.
\]
We denote the $e$-th iterate of $F$ by $F^e$. For any $\kay$-scheme $Y$, we write $\sC^Y_e \coloneqq \Hom_{\sO_Y} (F^e_* \sO_Y, \sO_Y)$.

\begin{definition}
Let $Y$ be a $\kay$-scheme, $\vp \in \sC^Y_e$, and $Z$ a closed subscheme of $Y$. We say $\vp$ is \emph{compatible} with $Z$ if $\vp(F^e_* \mathscr I(Z) ) \subseteq \mathscr I(Z)$, where $\mathscr I(Z)$ is the sheaf of ideals defining $Z$.
\end{definition}

  \begin{proposition}[{C.f.~\cite[Lemma 6.5]{Smolkin}}]
    Let $\displaystyle \varphi = \sum_{a_1, \ldots, a_n \in \frac{1}{q} \bZ^d} c_{a_\bullet} \bm{\pi}_{a_\bullet}$ be an element of $\sC^{T^{\times n}}_e$. Then $\varphi$ is compatible with $\Delta_n(T)$ if and only if, for all sequences of equivalence classes $[u_i], [v_i] \in \frac{1}{q} \bZ^d/\bZ^d$, $1 \leq i \leq n-1$, and all $D \in \frac{1}{q} \bZ^d$, we have
      \begin{equation*}
        \sum_{\substack{a_i \in [u_i]\\ a_n = D - \sum_{i=1}^{n-1} a_i}} c_{a_\bullet} = \sum_{\substack{b_i \in [v_i]\\ b_n = D - \sum_{i=1}^{n-1} b_i}} c_{b_\bullet}
      \end{equation*}
    \label{lemma:toricCompatWithDiag}
  \end{proposition}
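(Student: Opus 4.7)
My plan is a direct computation: unwind compatibility into a family of linear conditions on the $c_{a_\bullet}$ by applying $\varphi$ to a $\kay$-spanning set of $F^e_*\mathscr I(\Delta_n(T))$, then reindex to match the statement.

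First, I would identify a convenient spanning set. The diagonal comultiplication $\Delta_n^{\#}\colon \sO_{T^{\times n}}\to \sO_T$ sends $\mathbf y^{\gamma_\bullet} = y_1^{\gamma_1}\cdots y_n^{\gamma_n} \mapsto \mathbf x^{\sum_m \gamma_m}$, so $\mathscr I(\Delta_n(T)) = \ker \Delta_n^{\#}$ is the $\kay$-span of the monomial differences $\mathbf y^{\gamma_\bullet} - \mathbf y^{\gamma'_\bullet}$ with $\sum_m \gamma_m = \sum_m \gamma'_m$; correspondingly, $F^e_*\mathscr I(\Delta_n(T))$ is $\kay$-spanned by the differences $\mathbf y^{\alpha_\bullet/q} - \mathbf y^{\alpha'_\bullet/q}$ with $\alpha_m, \alpha'_m \in \bZ^d$ and $\sum_m \alpha_m = \sum_m \alpha'_m$. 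Since $\varphi$ is $\kay$-linear and $\mathscr I(\Delta_n(T))$ is a $\kay$-subspace, compatibility with the diagonal is equivalent to $\varphi$ sending every such difference back into $\mathscr I(\Delta_n(T))$.

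Next, I would compute directly. From the definition of $\bm{\pi}_{a_\bullet}$ one has
\[
\varphi\bigl(\mathbf y^{\alpha_\bullet/q}\bigr) \;=\; \sum_{\substack{a_\bullet \in (\frac{1}{q}\bZ^d)^n \\ a_m + \alpha_m/q \in \bZ^d\, \forall m}} c_{a_\bullet}\,\mathbf y^{a_\bullet + \alpha_\bullet/q}.
\]
Setting $S \coloneqq \sum_m \alpha_m/q = \sum_m \alpha'_m/q$ and post-composing with $\Delta_n^{\#}$, the coefficient of $\mathbf x^D$ (for $D \in \bZ^d$) in $\Delta_n^{\#}\varphi\bigl(\mathbf y^{\alpha_\bullet/q} - \mathbf y^{\alpha'_\bullet/q}\bigr)$ reads
\[
\sum_{\substack{a_m \in -\alpha_m/q + \bZ^d \\ \sum_m a_m = D-S}} c_{a_\bullet} \;-\; \sum_{\substack{a_m \in -\alpha'_m/q + \bZ^d \\ \sum_m a_m = D-S}} c_{a_\bullet},
\]
so compatibility reduces precisely to the vanishing of this expression for every admissible $(\alpha,\alpha',D)$.

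Finally, I would reindex to match the statement. Setting $[u_m] \coloneqq [-\alpha_m/q]$, $[v_m] \coloneqq [-\alpha'_m/q]$, and $D' \coloneqq D - S$, the above equations are parameterized by tuples of classes $[u_\bullet], [v_\bullet] \in (\frac{1}{q}\bZ^d/\bZ^d)^n$ with common sum together with a point $D' \in \frac{1}{q}\bZ^d$ lying in that same class. Because the constraint $\sum_m a_m = D'$ combined with $a_i \in [u_i]$ for $i \leq n-1$ forces $a_n$ into the class $[D'] - \sum_{i=1}^{n-1}[u_i]$ (and similarly for $b$), one may equivalently take $[u_1], \ldots, [u_{n-1}], [v_1], \ldots, [v_{n-1}]$ and $D' \in \frac{1}{q}\bZ^d$ as fully free parameters with no sum constraint and eliminate $a_n$ as the dependent variable $D' - \sum_{i=1}^{n-1} a_i$; this reproduces exactly the system appearing in the proposition. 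The only real obstacle is this last bookkeeping step---confirming that the constrained parameterization in terms of $(\alpha,\alpha',D)$ and the unconstrained parameterization in terms of the first $n-1$ classes and $D'$ generate the same family of equations on the $c_{a_\bullet}$, so that no conditions are lost or artificially added---and this follows from tracking the forced class of $a_n$ in both directions.
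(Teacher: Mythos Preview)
Your proposal is correct and follows essentially the same route as the paper's proof: both reduce compatibility to a degree-wise comparison of the image of monomial generators of $F^e_*\mathscr I(\Delta_n(T))$ under $\varphi$, then reindex. The only cosmetic difference is that you work with the $\kay$-spanning set of monomial differences and post-compose with $\Delta_n^{\#}$, whereas the paper uses the $\sO_{T^{\times n}}$-module generators $x^{v_1}\otimes\cdots\otimes x^{v_n}\bigl(x^{-u_1}\otimes\cdots\otimes x^{-u_{n-1}}\otimes x^{\sum u_i}-1\bigr)$ and works modulo $\mathscr I(\Delta_n(T))$; after expansion these amount to the same computation and the same final substitution.
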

  \begin{proof}
    As an $\sO_{T^{\times n}}$-module, $F^e_* \mathscr I(\Delta_n(T))$ is generated by the set
    \begin{equation*}
      \left\{ x^{v_1}\otimes \cdots \otimes x^{v_n} \left( x^{-u_1} \otimes x^{-u_2} \otimes \cdots \otimes x^{-u_{n-1}} \otimes x^{u_1 + \cdots + u_{n-1}} - 1 \right) \middle| v_i, u_i \in \frac{1}{q}\bZ^d \right\}. 
    \end{equation*}
    Thus, to say that $\varphi$ is compatible with $\Delta_n(T)$ is equivalent to saying that
    \begin{equation*}
      \varphi\left( x^{v_1}\otimes \cdots \otimes x^{v_n} \left( x^{-u_1} \otimes x^{-u_2} \otimes \cdots \otimes x^{-u_{n-1}} \otimes x^{\sum_{i=1}^{n-1} u_i} - 1 \right) \right) \equiv 0 \mod{\mathscr I (\Delta_n(T))}
    \end{equation*}
    for all $u_i, v_j \in \frac{1}{q} \bZ^d$, where $1 \leq i \leq n-1$ and $1 \leq j \leq n$.  In other words, 
    \begin{equation*}
      \varphi\left(  x^{v_1-u_1} \otimes x^{v_2-u_2} \otimes \cdots \otimes x^{v_{n-1}-u_{n-1}} \otimes x^{v_n + \sum_{i=1}^{n-1} u_i}\right) \equiv \varphi\left(  x^{v_1}\otimes \cdots \otimes x^{v_n} \right) \mod{\mathscr I(\Delta_n(T))}
    \end{equation*}
    for all $u_i, v_j \in \frac{1}{q} \bZ^d$. Using our description of $\vp$, this amounts to saying that
    \begin{align*}
      & \sum_{a_1, \ldots, a_n \in \frac{1}{q} \bZ^d} c_{a_\bullet} \bm{\pi}_{a_\bullet} \left(  x^{v_1-u_1} \otimes x^{v_2-u_2} \otimes \cdots \otimes x^{v_{n-1}-u_{n-1}} \otimes x^{v_n + \sum_{i=1}^{n-1} u_i}\right) \\
      \equiv & \sum_{b_1, \ldots, b_n \in \frac{1}{q} \bZ^d} c_{b_\bullet} \bm{\pi}_{b_\bullet} \left(  x^{v_1}\otimes \cdots \otimes x^{v_n} \right) \mod{\mathscr I(\Delta_n(T))}, \label{toricEquiv} \tag{$*$} 
    \end{align*}
    for all $u_i, v_j \in \frac{1}{q} \bZ^d$. By definition of $\bm{\pi}_{b_\bullet}$, we have 
    \begin{equation*}
      \sum_{b_1, \ldots, b_n \in \frac{1}{q} \bZ^d} c_{b_\bullet} \bm{\pi}_{b_\bullet} \left(  x^{v_1}\otimes \cdots \otimes x^{v_n} \right)  = \sum_{\substack{b_i \in -v_i +  \bZ^d,\\ 1 \leq i \leq n}} c_{b_\bullet} x^{v_1+ b_1} \otimes \cdots \otimes x^{v_n + b_n},
    \end{equation*}
    and similarly,
    \begin{align*}
      & \sum_{a_1, \ldots, a_n \in \frac{1}{q} \bZ^d} c_{a_\bullet} \bm{\pi}_{a_\bullet} \left(  x^{v_1-u_1} \otimes x^{v_2-u_2} \otimes \cdots \otimes x^{v_{n-1}-u_{n-1}} \otimes x^{v_n + \sum_{i=1}^{n-1} u_i}\right)\\
      =& \sum_{\substack{a_i \in u_i - v_i  + \bZ^d , \\ 1 \leq i \leq n-1 \\ a_n \in -v_n - \sum_{i=1}^{n-1} u_i + \bZ^d}} c_{a_\bullet} x^{a_1 + v_1 - u_1} \otimes \cdots \otimes x^{a_{n-1} + v_{n-1} - u_{n-1}} \otimes x^{a_n + v_n + \sum_{i=1}^{n-1} u_i}.
    \end{align*}
    It follows that equation \eqref{toricEquiv} is equivalent to saying, 
    \begin{equation}
      \sum_{\substack{a_i \in u_i - v_i  + \bZ^d , \\ 1 \leq i \leq n-1 \\ a_n \in -v_n - \sum_{i=1}^{n-1} u_i + \bZ^d}} c_{a_\bullet} x^{\sum_{j=1}^n v_j + a_j} = \sum_{\substack{b_i \in -v_j +  \bZ^d,\\ 1 \leq j \leq n}} c_{b_\bullet} x^{\sum_{j=1}^n v_j + b_j}, \label{toricSumDownstairs} \tag{$**$}
    \end{equation}
    for all $u_i, v_j \in \frac{1}{q} \bZ^d$. Note that, in the sum on the left, the condition that 
    \begin{equation*}
      a_n \in -v_n - \sum_{i=1}^{n-1} u_i + \bZ^d
    \end{equation*}
    is equivalent to saying that $\sum_{j=1}^n v_j + a_j \in \bZ^d$. 

    Equation \eqref{toricSumDownstairs} is an equality of Laurent  polynomials, so it can be checked degree-wise. Thus we have that $\vp$ is compatible with $\ker \Delta_n$ if and only if
    \begin{equation*}
      \sum_{\substack{a_i \in u_i - v_i  + \bZ^d , \\ 1 \leq i \leq n-1 \\ a_n = D - \sum_{j=1}^n v_j -  \sum_{i=1}^{n-1} a_i}} c_{a_\bullet}  = \sum_{\substack{b_i \in -v_i +  \bZ^d,\\ 1 \leq i \leq n-1\\ b_n = D - \sum_{j=1}^n v - \sum_{i=1}^{n-1} b_i}} c_{b_\bullet}
    \end{equation*}
    for all $u_i, v_j \in \frac{1}{q} \bZ^d,$ where $1 \leq  i \leq n-1$ and $1 \leq j \leq n$, and all $D \in \bZ^d$.  Making the substitutions $D' = D - \sum_{j=1}^n v_n$,  $u_i' = u_i - v_i$, and $v'_i = - v_i$, for $1 \leq i \leq n-1$, we notice that the preceding equation is equivalent to saying that
    \begin{equation*}
      \sum_{\substack{a_i \in u_i' + \bZ^d,\\ 1 \leq i \leq n-1\\ a_n = D' - \sum_{i=1}^{n-1} a_i}} c_{a_\bullet} = \sum_{\substack{b_i \in v'_i + \bZ^d,\\ 1 \leq i \leq n-1 \\ b_n = D' - \sum_{i=1}^{n-1} b_i}} c_{b_\bullet}
    \end{equation*}
    for all $u'_i, v'_i \in \frac{1}{q} \bZ^d$ and all $D' \in \frac{1}{q} \bZ^d$, where $1 \leq i \leq n-1$.

  \end{proof}

  \begin{lemma}
    Let $\displaystyle \varphi = \sum_{a_1, \ldots, a_n \in \frac{1}{q} \bZ^d} c_{a_\bullet} \bm{\pi}_{a_\bullet} \in \sC^{T^{\times n}}_e$ be a map compatible with $\Delta_n(T)$. Fix any set of equivalence classes $[u_1], \ldots, [u_{n-1}] \in \frac{1}{q}\bZ^d/\bZ^d$. Then the restriction of $\vp$ to $\Delta_n(T)$ is given by 
    \begin{equation*}
      \overline \vp = \sum_{D \in \frac{1}{q} \bZ^d}  \sum_{\substack{a_i \in [u_i]\\ a_n = D - \sum_{i=1}^{n-1} a_i }} c_{a_\bullet} \pi_D.
    \end{equation*}

    \label{lemma:toricRestrictionCalc}
  \end{lemma}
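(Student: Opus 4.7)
The approach is to compute $\bar\vp(x^w)$ for each monomial $x^w \in F^e_*\sO_T$ with $w \in \frac{1}{q}\bZ^d$ using the definition of the restriction, and then match the result against the Payne-type expansion $\bar\vp = \sum_{D} c'_D \pi_D$ guaranteed by Proposition~\ref{Prop:PayneProp}. Since $\vp$ is compatible with $\Delta_n(T)$ by hypothesis, it descends to a $p^{-e}$-linear map $\bar\vp$ on $\sO_T \cong \sO_{\Delta_n(T)}$; concretely, for any lift $g \in F^e_*\sO_{T^{\times n}}$ of $\bar f \in F^e_*\sO_T$ (meaning $\Delta_n(g) = \bar f$), we have $\bar\vp(\bar f) = \Delta_n(\vp(g))$.

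Next, I would fix classes $[u_1], \ldots, [u_{n-1}] \in \frac{1}{q}\bZ^d/\bZ^d$ with chosen representatives $u_i \in \frac{1}{q}\bZ^d$, and for each $w \in \frac{1}{q}\bZ^d$ write down the specific lift
\[
g = x^{-u_1} \otimes \cdots \otimes x^{-u_{n-1}} \otimes x^{w + \sum_{i<n} u_i},
\]
where the signs are calibrated so that the $\pi_a$-support condition $a+u \in \bZ^d$ matches the form of the answer. Expanding $\vp(g) = \sum_{a_\bullet} c_{a_\bullet} \bm{\pi}_{a_\bullet}(g)$, only the tuples $(a_1, \ldots, a_n)$ with $a_i \in [u_i]$ for $i < n$ contribute nonzero summands (since $\pi_{a_i}(x^{-u_i}) \neq 0$ iff $a_i \in u_i + \bZ^d$). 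Applying $\Delta_n$ then collapses each surviving tensor to the single monomial $x^{w + \sum_i a_i}$, and reading off the coefficient of $x^v$ (with $v \in \bZ^d$) against $\sum_D c'_D \pi_D(x^w) = \sum_{D \in -w + \bZ^d} c'_D x^{w+D}$ immediately yields
\[
c'_D = \sum_{\substack{a_i \in [u_i],\ 1 \leq i \leq n-1 \\ a_n = D - \sum_{i<n} a_i}} c_{a_\bullet}
\]
with $D = v - w$, which is exactly the desired formula.

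Finally, the class of $a_n$ is automatically $[D] - \sum_{i<n}[u_i]$, which agrees with the support condition $a_n + (w + \sum_{i<n} u_i) \in \bZ^d$ required by $\pi_{a_n}$, so no separate constraint on $a_n$ is needed in the formula. I expect the chief conceptual hurdle to be that the right-hand side a priori appears to depend on the choice of $[u_1],\ldots,[u_{n-1}]$, whereas the coefficients $c'_D$ are uniquely determined by $\bar\vp$. This is no accident: the independence of the sum under varying the $[u_i]$ is precisely the content of Proposition~\ref{lemma:toricCompatWithDiag}, so it is exactly the compatibility hypothesis on $\vp$ that feeds into this final consistency check. Beyond this, the argument reduces to routine bookkeeping of exponents, and the only real care needed is the sign convention relating the representatives $u_i$ used in the lift to the classes $[u_i]$ appearing in the stated formula.
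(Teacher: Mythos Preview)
Your proposal is correct and follows essentially the same approach as the paper: compute the restriction by choosing an explicit lift of $x^w$ to $(\sO_{T^{\times n}})^{1/q}$, apply $\vp$, and collapse under $\Delta_n$. The only difference is the choice of lift: the paper uses $x^v \otimes 1 \otimes \cdots \otimes 1$ (after first reducing by linearity to a fixed $D$), obtains the formula with the particular classes $[-v], [0], \ldots, [0]$, and then invokes Proposition~\ref{lemma:toricCompatWithDiag} to replace these by arbitrary $[u_i]$; your tailored lift $x^{-u_1}\otimes \cdots \otimes x^{-u_{n-1}} \otimes x^{w+\sum u_i}$ lands directly on the stated form, so compatibility enters only as the consistency check you describe.
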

  \begin{proof}
    By \autoref{lemma:toricCompatWithDiag}, we know that the maps
    \begin{equation*}
      \sum_{\substack{a_i \in \frac{1}{q}\bZ^d \\\sum_{i=1}^n a_i = D}} c_{a_\bullet} \bm{\pi}_{a_\bullet}
    \end{equation*}
    are compatible with $\Delta_n(T)$ for all $D \in \frac{1}{q} \bZ^d$. Further, the restriction map is linear. Thus we may assume without loss of generality that
    \begin{equation*}
      \vp = \sum_{\substack{a_i \in \frac{1}{q}\bZ^d \\ \sum_{i=1}^n a_i = D}} c_{a_\bullet} \bm{\pi}_{a_\bullet}
    \end{equation*}
    for some $D\in \frac{1}{q} \bZ^d$. Let $v \in \frac{1}{q} \bZ^d$, so that $x^v \in \Gamma\left(T^{\times n}, \sO_{T^{\times n}}\right)$, and let $\vp|_{\Delta_n(T)}$ denote the restriction of $\vp$ to $\Delta_n(T)$. We compute: 
    \begin{align*}
      \vp|_{\Delta_n(T)}(x^v) &=  \left. \left( \sum_{\substack{a_i \in \frac{1}{q}\bZ^d \\ \sum_{i=1}^n a_i = D}} c_{a_\bullet} \pi_{a_\bullet}(x^v \otimes 1 \otimes \cdots \otimes 1) \right)\right|_{\Delta_n(T)}\\
      &=  \left. \left( \sum_{\substack{a_1 \in -v + \bZ^d\\ a_2, \ldots, a_{n-1} \in \bZ^d\\ D - \sum_{i=1}^{n-1} a_i \in \bZ^d}} c_{a_\bullet} x^{a_1 + v} \otimes x^{a_2} \otimes \cdots \otimes x^{a_{n-1}} \otimes x^{D - \sum_{i=1}^{n-1} a_i} \right)\right|_{\Delta_n(T)}\\
      &=  \begin{cases}
        \sum_{\substack{a_1 \in -v + \bZ^d\\ a_2, \ldots, a_{n-1} \in \bZ^d}} c_{a_1, \ldots, a_{n-1}, D - \sum_{i=1}^{n-1} a_i } x^{v+D}, & v + D \in \bZ^d\\
        0, & \textrm{otherwise}
      \end{cases} \\
      &=   \sum_{\substack{a_1 \in -v + \bZ^d\\ a_2, \ldots, a_{n-1} \in \bZ^d \\ a_n = D - \sum_{i=1}^{n-1} a_i}} c_{a_\bullet}  \pi_D(x^v)
    \end{align*}
  By \autoref{lemma:toricCompatWithDiag}, we know that 
  \begin{equation*}
     \sum_{\substack{a_1 \in -v + \bZ^d\\ a_2, \ldots, a_{n-1} \in \bZ^d \\ a_n = D - \sum_{i=1}^{n-1} a_i}} c_{a_\bullet} = \sum_{\substack{a_i \in [u_i]\\ a_n = D - \sum_{i=1}^{n-1} a_i }} c_{a_\bullet},
  \end{equation*}
  as desired. 
  \end{proof}

  \begin{lemma}
    Let $\vp \in \sC^{X^{\times n}}_e$. Then $\vp$ induces a map $\vp_T \in \sC^{T^{\times n}}_e$. Further, $\vp$ is compatible with $\Delta_n(X)$ if and only if $\vp_T$ is compatible with $\Delta_n(T)$. 
    \label{lemma:compatXiffcompatT}
  \end{lemma}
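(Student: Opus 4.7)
The plan is to realize $T^{\times n} \hookrightarrow X^{\times n}$ as a dense open affine embedding and to check both assertions by a clean localization argument.  Write $X^{\times n} = \Spec A$ with $A = R^{\otimes n}$, and choose $f \in A$ whose zero locus equals the complement of $T^{\times n}$ in $X^{\times n}$ (for example, the product of all characters cutting out the torus-invariant divisors across the $n$ factors).  Then $T^{\times n} = \Spec A_f$.  Since $X$ is toric and hence an integral scheme, so is $X^{\times n}$, and the canonical map $A \hookrightarrow A_f$ is injective.  For any $\vp \in \sC^{X^{\times n}}_e = \Hom_A(F^e_* A, A)$, localizing at $f$ produces the desired $\vp_T \in \sC^{T^{\times n}}_e = \Hom_{A_f}(F^e_* A_f, A_f)$, giving the first claim.

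For the compatibility statement, observe that scheme-theoretically $\Delta_n(T) = \Delta_n(X) \cap T^{\times n}$, so if $I \coloneqq \mathscr I(\Delta_n(X))$ is the ideal defining $\Delta_n(X)$ in $A$, then $\mathscr I(\Delta_n(T)) = I \cdot A_f$.  The forward direction is immediate: the assumption $\vp(F^e_* I) \subseteq I$ survives localization at $f$, yielding $\vp_T(F^e_*(I\cdot A_f)) \subseteq I\cdot A_f$.  For the reverse direction, suppose $\vp_T$ is compatible with $\Delta_n(T)$ and let $g \in I$.  Then $\vp(g) \in A$ by the very definition of $\vp$, and $\vp(g) \in I\cdot A_f$ by our compatibility hypothesis on $\vp_T$.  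To conclude that $\vp(g) \in I$, it suffices to establish the identity $I = I\cdot A_f \cap A$; equivalently, that $f$ is a non-zerodivisor modulo $I$.

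This last point is the only subtlety, and it follows from irreducibility:  $\Delta_n(X)$ is the image of the irreducible scheme $X$ under a closed immersion and is therefore itself irreducible, with generic point lying in the dense open subset $\Delta_n(T)$.  Consequently $f$ cannot lie in the unique minimal prime of $I$, so it is a non-zerodivisor modulo $I$, and the reverse implication follows.  Beyond this observation, the argument is entirely routine localization; no further machinery from the theory of Cartier algebras or toric varieties seems to be required.
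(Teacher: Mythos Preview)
Your proof is correct and follows essentially the same approach as the paper's. The paper defers to \cite[Lemma 1.1.7]{BrionKumarFrobeniusSplitting} and phrases the reverse implication as a density/closure argument (``$\vp(F^e_* \mathscr I(\Delta_n(X)))$ vanishes along $\Delta_n(T)$, hence along its closure $\Delta_n(X)$''), whereas you make the underlying commutative algebra explicit by showing $I = I A_f \cap A$; these are the same fact. One small remark: your final step, deducing that $f$ is a non-zerodivisor modulo $I$ from $f$ avoiding the unique minimal prime, tacitly uses that $A/I$ is reduced (irreducibility alone does not rule out embedded primes), but this is immediate since $\Delta_n$ is a closed immersion and hence $A/I \cong R$ is a domain.
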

  \begin{proof}
    The first statement follows by the same argument as in \cite[Lemma 1.1.7(i)]{BrionKumarFrobeniusSplitting} (note that we don't assume $\vp$ is a splitting, but this is not necessary for that argument). If $\vp$ is compatible with $\Delta_n(X)$, then $\vp_T$ is clearly compatible with $\Delta_n(T) = \Delta_n(X) \cap T^{\times n}$. On the other hand, suppose that $\vp_T$ is compatible with $\Delta_n(T)$. We show $\vp$ is compatible with $\Delta_n(X)$ by an argument similar to \cite[Lemma 1.1.7(ii)]{BrionKumarFrobeniusSplitting}. In particular, $\Delta_n(X)$ is closed and $\Delta_n(T)$ is dense in $\Delta_n(X)$. It follows that the closure of $\Delta_n(X) \cap T$ in $X^{\times n}$ is $\Delta_n(X)$. Further, we know that $\vp\left(F^e_* \mathscr I\left( \Delta_n(X) \right)\right)$ is an ideal sheaf that vanishes along  $\Delta_n(T)$. It follows that $\vp\left(F^e_* \mathscr I\left( \Delta_n(X) \right)\right)$ vanishes along $\Delta_n(X)$, and so $\vp\left(F^e_* \mathscr I\left( \Delta_n(X) \right)\right)\subseteq \mathscr I\left( \Delta_n(X) \right)$, as desired. 
  \end{proof}

  \begin{lemma}
Let $\displaystyle \varphi = \sum_{a_1, \ldots, a_n \in \frac{1}{q} \bZ^d} c_{a_\bullet} \bm{\pi}_{a_\bullet}$ be an element of $\Hom_{\sO_{X^{\times n}}}(F^e_* \sO_{X^{\times n}}, \sO_{X^{\times n}})$. Then $\varphi$ is compatible with $\Delta_n(X)$ if and only if the maps
\begin{equation*}
  \sum_{\substack{a_1, \ldots, a_n \in \frac{1}{q} \bZ^d\\ a_1 + \cdots a_n = D}} c_{a_\bullet} \bm{\pi}_{a_\bullet}
\end{equation*}
are compatible with $\Delta_n(X)$ for all $D \in \frac{1}{q}\bZ^d$. It follows that $\sD^{(n)}(R)$ is generated as a $k$-vectorspace by the maps $\pi_a$ such that $\pi_a  \in \sD^{(n)}(R)$. 
    \label{cor:toric}
  \end{lemma}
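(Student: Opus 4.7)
The plan is to combine the two preceding lemmas about compatibility with the diagonal embedding of the torus, and then leverage the resulting decomposition to factor $\varphi$ into monomial-to-monomial pieces. First, by Lemma \ref{lemma:compatXiffcompatT}, compatibility of $\varphi$ with $\Delta_n(X)$ is equivalent to compatibility of its induced map on $T^{\times n}$ with $\Delta_n(T)$. By Proposition \ref{lemma:toricCompatWithDiag}, the latter is equivalent to a system of equalities indexed by $D \in \tfrac{1}{q}\bZ^d$ and pairs of class sequences in $(\tfrac{1}{q}\bZ^d/\bZ^d)^{n-1}$. The key observation is that, for each fixed $D$, the equation involves only those coefficients $c_{a_\bullet}$ with $\sum_i a_i = D$, since the last index is pinned by $a_n = D - \sum_{i<n} a_i$. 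The conditions therefore decouple across $D$, which is exactly what the first part of the lemma asserts.

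For the generation statement, I would take any $\varphi \in \sD^{(n)}_e(R)$ and write $\varphi = \sum_a c_a \pi_a$ using Proposition \ref{Prop:PayneProp}. By the definition of $\sD^{(n)}_e(R)$, there exists an $R$-linear lift $\hat\varphi \: (R^{\otimes n})^{1/q} \to R^{\otimes n}$ compatible with the ideal of $\Delta_n(X)$. Expanding $\hat\varphi = \sum_{a_\bullet} c_{a_\bullet} \bm{\pi}_{a_\bullet}$ using the Payne basis for $X^{\times n}$ (whose anticanonical polytope is the product $P^\circ_{-K_X} \times \cdots \times P^\circ_{-K_X}$), the first part of the lemma produces a decomposition $\hat\varphi = \sum_D \hat\varphi_D$ into compatible pieces $\hat\varphi_D \coloneqq \sum_{\sum a_i = D} c_{a_\bullet} \bm{\pi}_{a_\bullet}$. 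Applying Lemma \ref{lemma:toricRestrictionCalc} to each $\hat\varphi_D$, its descent to the diagonal is a scalar multiple of the single Payne generator $\pi_D$; descending $\hat\varphi$ as a whole recovers $\varphi$, and matching coefficients in the Payne basis forces this scalar to be exactly $c_D$. Thus each $\hat\varphi_D$ is a compatible lift of $c_D \pi_D$, so $c_D \pi_D \in \sD^{(n)}_e(R)$, and rescaling by $c_D^{-1} \in \kay$ shows $\pi_D \in \sD^{(n)}_e(R)$ whenever $c_D \ne 0$. Hence $\varphi$ is a $\kay$-linear combination of $\pi_a$'s that lie in $\sD^{(n)}_e(R)$.

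The only delicate point is ensuring that the decomposition $\hat\varphi = \sum_D \hat\varphi_D$ really produces honest elements of $\Hom_{\sO_{X^{\times n}}}(F^e_* \sO_{X^{\times n}}, \sO_{X^{\times n}})$ (not merely sections on the torus). This follows immediately from the Payne basis description on $X^{\times n}$: each individual basis element $\bm{\pi}_{a_\bullet}$ appearing with nonzero coefficient already lies in this Hom-module, so any subset sum does too. Beyond this bookkeeping I do not expect a substantive obstacle, since the bulk of the work is already hidden inside Lemmas \ref{lemma:toricCompatWithDiag}, \ref{lemma:toricRestrictionCalc}, and \ref{lemma:compatXiffcompatT}.
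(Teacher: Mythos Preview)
Your proposal is correct and follows essentially the same route as the paper's proof: reduce compatibility on $X$ to compatibility on $T$ via Lemma~\ref{lemma:compatXiffcompatT}, observe from Proposition~\ref{lemma:toricCompatWithDiag} that the compatibility conditions decouple across the values of $D = \sum_i a_i$, and then use Lemma~\ref{lemma:toricRestrictionCalc} to identify the restriction of each $D$-piece with a scalar multiple of $\pi_D$. The paper's proof is much terser (essentially just citing the three lemmas), but your added bookkeeping---particularly the observation that each $\hat\varphi_D$ is already an honest map on $X^{\times n}$ because the Payne basis elements are---is exactly the content being invoked.
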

  \begin{proof}
    If $X = T$, then the first assertion follows immediately from \autoref{lemma:toricCompatWithDiag}. The second assertion follows from the first, along with \autoref{lemma:toricRestrictionCalc}. The general case follows from the case where $X=T$ by \autoref{lemma:compatXiffcompatT}.

  \end{proof}

  \begin{proof}[Proof of \autoref{thm:toricDn}] 
    By \autoref{cor:toric}, we can reduce to checking that a map $\pi_b \in \Hom_{\sO_X}( F^e_* \sO_X, \sO_X)$ is an element of $\sD^{(n)}(X)$ if and only if, for all $u_1, \ldots, u_{n-1} \in \frac{1}{q} \bZ^d$, there exist $v_i \in u_i + \bZ^d$ with $v_i \in P_{_{-K_X}}^\circ$ for $i = 1, \ldots, n-1$ and with $a - \sum_{i=1}^{n-1} v_i \in P_{-K_X}^\circ$. 

    For the ``only if'' implication, let $\pi_b \in \sD^{(n)}_{e, X}(X)$. Then there exists a lifting, 
    \begin{equation*}
      \widehat \pi \coloneqq \sum_{a_1, \ldots, a_n \in \frac{1}{q} \bZ^d \cap P_{-K_X}} c_{a_\bullet} \bm{\pi}_{a_\bullet},
    \end{equation*}
    of $\pi_b$ to $\Hom_{\sO_X^{\times n}}( F^e_* \sO_{X^{\times n}}, \sO_{X^{\times n}})$. By \autoref{cor:toric}, we have that 
    \begin{equation*}
      \widehat \pi^\circ \coloneqq \sum_{\substack{a_1, \ldots, a_n \in \frac{1}{q} \bZ^d \cap P_{-K_X}\\ a_1 + \cdots + a_n = b}} c_{a_\bullet} \bm{\pi}_{a_\bullet},
    \end{equation*}
    is compatible with $\Delta_n(X)$, and its restriction to $T^{\times n}$ is compatible with $\Delta_n(T)$ by \autoref{lemma:compatXiffcompatT}. It follows from \autoref{lemma:toricRestrictionCalc} that the restriction of $\widehat \pi^\circ$ to $\Delta_n(X)$ is $\pi_b$, and also that
    \begin{equation*}
      \sum_{\substack{a_1, \ldots, a_n \in \frac{1}{q} \bZ^d \cap P_{-K_X}\\ a_i \in [u_i], 1 \leq i \leq n-1\\ a_1 + \cdots + a_n = b}} c_{a_\bullet} = 1
    \end{equation*}
    for all sequences of equivalence classes, $[u_1], \ldots, [u_{n-1}] \in \frac{1}{q} \bZ^d/\bZ^d$. But this is only possible if these sums are nonempty. 

    Conversely, let $U_1, \ldots, U_{(qd)^{n-1}}$ be all the sequences of $n-1$ elements in $\frac{1}{q} \bZ^d/\bZ^d$. Let $U_{i,j}$ denote the $j$th element of the sequence $U_i$. By assumption, there exist $a_{i,j} \in U_{i,j}$ for all $i,j$ with $1 \leq i \leq (qd)^{n-1}$ and $1 \leq j \leq n$, such that $a_{i,j} \in P_{-K_X}^\circ$ for all $i,j$, and $b - \sum_{j=1}^{n-1}a_{i,j}\in P_{-K_X}^\circ$ for all $i$. Then we set
    \begin{equation*}
      \vp = \sum_{i=1}^{(qd)^{n-1}} \pi_{a_{i,1}} \otimes \cdots \otimes \pi_{a_{i, n-1}} \otimes \pi_{b - \sum_{j=1}^{n-1} a_{i,j}}.
    \end{equation*}
    By \autoref{lemma:compatXiffcompatT} and \autoref{lemma:toricCompatWithDiag}, $\vp$ is compatible with $\Delta_n(X)$. By \autoref{lemma:toricRestrictionCalc}, $\pi_b$ is the restriction of $\vp$ to  $\Delta_n(X)$. In particular, $\pi_b \in \sD^{(n)}(X)$. 
  \end{proof}

\bibliographystyle{amsalpha}
\bibliography{References}

\end{document}